\newcommand\A{\mathrm{A}}
\renewcommand\S{\mathrm{S}}
\newcommand\Aut{\mathrm{Aut}}
\renewcommand\P{\mathcal{P}}
\newcommand\fix{\mathrm{fix}}
\newcommand\supp{\mathrm{supp}}
\newcommand\M{\mathcal{M}}
\newcommand\Q{\mathcal{Q}}
\newcommand\aug{\fboxsep=-\fboxrule\!\!\!\fbox{\strut}\!\!\!}
\newcommand\Z{\mathbb{Z}}
\theoremstyle{plain}
\newtheorem{theorem}{Theorem}[section]
\newtheorem{lemma}[theorem]{Lemma}
\newtheorem{corollary}[theorem]{Corollary}
\newtheorem{proposition}[theorem]{Proposition}
\theoremstyle{definition}
\newtheorem{remark}[theorem]{Remark}
\begin{document}

\begin{frontmatter}[classification=text]

\title{Regular bipartite multigraphs have \\ many (but not too many) symmetries} 

\author[Cam]{Peter J. Cameron}
\author[del]{Coen del Valle\thanks{Supported by the Natural Sciences and Engineering Research Council of Canada (NSERC), [funding reference number PGSD-577816-2023], as well as a University of St Andrews School of Mathematics and Statistics Scholarship.}}
\author[Ron]{Colva M. Roney-Dougal}

\begin{abstract}
Let $k$ and $l$ be integers, both at least 2. A \emph{$(k,l)$-bipartite graph} is an $l$-regular bipartite multigraph with coloured bipartite sets of size $k$. Define $\chi(k,l)$ and $\mu(k,l)$ to be the minimum and maximum order of automorphism groups of $(k,l)$-bipartite graphs, respectively. 
We determine $\chi(k,l)$ and $\mu(k,l)$ for $k\geq 8$, and analyse the generic situation when $k$ is fixed and $l$ is large. In particular, we show that almost all such graphs have automorphism groups which fix the vertices pointwise and have order far less than $\mu(k,l)$. These graphs are intimately connected with both contingency tables with uniform margins and uniform set partitions; 
we examine the uniform distribution on the set of $k\times k$ contingency tables with uniform margin~$l$, showing that with high probability all entries stray far from the mean. We also show that the symmetric group acting on uniform set partitions is non-synchronizing.
\end{abstract}
\end{frontmatter}

\section{Introduction}
Frucht's Theorem~\cite{frucht} is a fundamental result bridging the gap between group theory and graph theory. It states that every finite abstract group is the automorphism group of some finite graph. In contrast to Frucht's Theorem, the Erd\H{o}s--R\'enyi Theorem~\cite{erren} tells us that almost all finite simple graphs have trivial automorphism group.

Both of these results have seen far-reaching extensions. In the case of Frucht's
Theorem, there are many classes of combinatorial structures, including
regular and strongly regular graphs, Latin squares, and Steiner triple systems, 
where similar universality results hold (see e.g.~\cite{men,phelps,sab}). Kantor defined in~\cite{kantor} a family of finite permutation groups to be
\emph{universal} if every finite group is isomorphic to a 2-point stabiliser in
some group in the family. This generalises Frucht's theorem in the following way: consider the set $\Omega$ of all graphs with vertex set $[n]:=\{1,2,\ldots,n\}$. Each graph $\Gamma\in \Omega$ can be naturally identified with an element $\gamma$ of the elementary abelian group $2^{n\choose 2}$, so $2^{n\choose 2}$ acts on $\Omega$ via the symmetric difference of edge sets. Moreover, $\S_n$ acts on $\Omega$ by vertex permutation, giving an action of $2^{n\choose 2} :  \S_n$ on $\Omega$. 
The 2-point stabilisers are $$\S_n\cap(\gamma\S_n\gamma)=\{\sigma\in\S_n : \gamma\sigma\gamma\in \S_n\}=\Big\{\sigma\in\S_n : \sigma^{-1}\gamma\sigma\gamma\in \S_n\cap2^{n\choose 2} =1\Big\}=
\{\sigma\in\S_n : \Gamma^\sigma = \Gamma\}=\Aut(\Gamma),$$ for some graph $\Gamma$. 
Frucht's Theorem says that this family of permutation groups is universal.

In the case of the Erd\H{o}s--R\'enyi Theorem, for certain types of structure such as Latin squares and Steiner
triple systems it is known that almost all instances have trivial automorphism groups (see~\cite{mw} and~\cite{babai}, respectively).
But there are other structures for which this is not the case: for example almost all finite trees have a non-trivial automorphism~\cite{erren}. Moreover,
the automorphism group of every finite tree is built from the trivial group by
the operations of direct product and of wreath product with a symmetric group~\cite{jordan},
so there exist groups which are not the automorphism group of any tree.

A \emph{multigraph} is a triple $\Gamma=(V,E,I)$ where $V$ is a set of \emph{vertices}, $E$ is a set of \emph{edges}, and $I$ is an incidence relation such that for each $e\in E$ there exist exactly two distinct vertices $u$ and $v$ incident with $e$; we typically omit `$I$' and write $\Gamma=(V,E)$. Note that if we also require that any pair of vertices are mutually incident with at most one edge then we recover the definition of a simple graph. An \emph{automorphism} of the vertex-coloured multigraph $\Gamma=(V,E)$ is a pair $(\varphi_V,\varphi_E)$ where $\varphi_V$ is a colour preserving bijection from $V$ to itself and $\varphi_E$ is a bijection from $E$ to itself such that $(v,e)\in I$ if and only if $(\varphi_V(v),\varphi_E(e))\in I$. We use $\Aut(\Gamma)$ to denote the group of automorphisms of $\Gamma$. Note that this is different from the notion of automorphism used sometimes to refer simply to the vertex maps.

Given integers $k$ and $l$, a $(k,l)$-\emph{bipartite graph} is an $l$-regular bipartite multigraph with bipartite blocks of size $k$, one of which is coloured black, and the other white. Define $$\mathcal{A}(k,l):=\{|\Aut(\Gamma)| : \Gamma \text{ a $(k,l)$-bipartite graph}\},$$ and set \begin{equation}\label{chi}\tag{$*$}\chi(k,l):=\min(\mathcal{A}(k,l))\hspace{.5cm}\text{ and }\hspace{.5cm}\mu(k,l):=\max(\mathcal{A}(k,l)).\end{equation} 

We are interested in the symmetric
group of degree $kl$ acting on \emph{$(k,l)$-partitions}: partitions of $[kl]$ into $k$ parts of size $l$. The stabiliser of one point is the wreath product
$\S_l\wr \S_k$, and we will see that the 2-point stabilisers are automorphism
groups of $(k,l)$-bipartite graphs. For small $l$, the analogues of the Frucht and
Erd\H{o}s--R\'enyi theorems hold. (For the first, this is a result of
James~\cite{james2}, showing that these groups are universal in Kantor's sense.) But for
large $l$, there are multiple edges, so the automorphism group has a normal
subgroup which fixes all vertices and acts as the symmetric group on the set
of edges joining each pair of vertices. We are mostly concerned with the orders of such groups: our first main result is the following.

\begin{theorem}\label{mainmin}
Let $k\geq 8$, $l\geq2$ be integers and write $l=qk+r$ with $-2\leq r\leq k-3$. Then $\mu(k,l)=k!l!^k$, and if $l=2$ then $\chi(k,l)=2k$, otherwise
$$\chi(k,l)=\begin{cases}
(q+1)!^kq!^{k^2-2k}(q-1)!^k&\text{ if $r=0$},\\
(q+1)!^{rk}q!^{k^2-rk}&\text{ if $3\leq r\leq k-3$},\\
(q+r)!^{k+\lceil k/2\rceil }q!^{k^2-k-2\lceil k/2\rceil}(q-r)!^{\lceil k/2\rceil}&\text{ if $r=\pm1$ },\\
(q+\varepsilon)!^{2k+1}q!^{k^2-2k-2}(q-\varepsilon)!&\text{ if $r=2\varepsilon$ with $\varepsilon=\pm1$}.\\
\end{cases}$$
\end{theorem} 
The maximum is attained by the $(k,l)$-bipartite graph consisting of $k$ black-white pairs, with exactly $l$ edges between the vertices within each pair, and no edges crossing between pairs. Our next main result states that almost all $(k,l)$-bipartite graphs have far fewer automorphisms than the maximum.
\begin{theorem}\label{mainrand}
Fix an integer $k\geq 2$ and $\epsilon>0$. Let $l$ be an integer, and let $\Gamma$ be a uniformly chosen $(k,l)$-bipartite graph. Then asymptotically almost surely \begin{itemize}\item[(i)] if $k\geq 3$ then every vertex of $\Gamma$ is fixed by $\Aut(\Gamma)$; and 
\item[(ii)] $|\Aut(\Gamma)|\leq l!^k/l^{{(k-1)}^2-\epsilon},$
\end{itemize}
as $l\to \infty$.
\end{theorem}
\begin{remark}
Statement (ii) still holds in the case where we are instead choosing uncoloured bipartite graphs. This follows from the proof by noting that the total number of vertex permutations of an uncoloured $l$-regular bipartite graph with parts of size $k$ is bounded above by the constant $2k!$.
\end{remark}

In Section~\ref{prelim} we introduce a class of matrices which we call \emph{$(k,l)$-intersection matrices}, also known in the literature as integer doubly-stochastic matrices or uniform contingency tables, among other names. These are the bipartite adjacency matrices of $(k,l)$-bipartite graphs, that is, $k\times k$ matrices of non-negative integers with all row and column sums equal to $l$. Much of our study of $(k,l)$-bipartite graphs is via these $(k,l)$-intersection matrices. We prove an additional result about them which may be of independent interest.
\begin{theorem}\label{main4}
Let $k\geq2$ and $l$ be positive integers and let $f(l)$ be any $o(l)$ function. Let $X$ be a uniformly chosen $(k,l)$-intersection matrix. Then asymptotically almost surely $\min(|X_{i,j}-l/k|)>f(l)$ as $l\to \infty$.
\end{theorem}
In terms of $(k,l)$-bipartite graphs, Theorem~\ref{main4} implies that every black-white pair of vertices of a uniformly chosen $(k,l)$-bipartite graph is expected to have edge multiplicity straying far from the mean $l/k$ when $l$ is large (see Corollary~\ref{edges}). This suggests why one should not expect a randomly chosen $(k,l)$-bipartite graph to have few symmetries.

In addition to the family of symmetric groups acting on uniform partitions being universal, we show that these groups are also \emph{non-synchronizing}. A
permutation group $G$ acting on a set $\Omega$ is \emph{synchronizing} if for
every map $f:\Omega\to\Omega$ which is not a permutation, the monoid
$\langle G,f\rangle$ contains an element sending all of $\Omega$ to a single point. Synchronizing groups are known to be primitive, but classifying which families of primitive groups are synchronizing remains an open problem. Our final main result is a contribution to this classification.

\begin{theorem}\label{syncthm}
Let $k$ and $l$ be positive integers with $k\geq 3$ and $l\geq2$. Then both of the symmetric and alternating groups of degree $kl$ acting on the set of all $(k,l)$-partitions are
non-synchronizing.
\end{theorem}

The structure of the paper is as follows. In Section~\ref{prelim} we cover some of the preliminary structural concepts and tools. Section~\ref{minaut} is dedicated to proving Theorem~\ref{mainmin}. In Section~\ref{probaut} we carry out a probabilistic investigation of the problem, proving Theorems~\ref{mainrand} and~\ref{main4}. Finally, we wrap up the paper with a brief note on synchronization in Section~\ref{sync}.

\section{Preliminaries}\label{prelim}
In this section we will describe a natural way of associating a $(k,l)$-bipartite graph with each pair of $(k,l)$-partitions. Given positive integers $k$ and $l$ define $\S_{k\times l}$ to be the symmetric group of degree $kl$ acting on the set of all $(k,l)$-partitions. 

We first describe an intermediate object linking $(k,l)$-partitions and $(k,l)$-bipartite graphs. 
Denote by $\mathcal{M}_{a,b}$ the set of $a\times b$ matrices with entries in $\mathbb{Z}_{\geq 0}$.
Given $(k,l)$-partitions $\mathcal{P}=\{P_1,\dots, P_k\}$ and $\mathcal{Q}=\{Q_1,\dots, Q_k\}$ the \emph{intersection matrix} $M=M(\mathcal{P},\mathcal{Q})$ is the matrix $(|P_i\cap Q_j|)_{ij}\in\mathcal{M}_{k,k}$; we will usually order our partitions by the smallest element in each part so that $M$ is well-defined. We say that $M$ is a $(k,l)$-\emph{intersection matrix}. 
Since $\mathcal{P}$ and $\mathcal{Q}$ are partitions, $$\sum_{j=1}^k m_{ij}=\sum_{j=1}^k|P_{i}\cap Q_{j}|=|P_{i}|=l$$ for each $i$, and by symmetry the same is true for columns. In fact, the converse is true.

\begin{lemma}\label{mat}
Let $N=(n_{ij})\in\mathcal{M}_{k,k}$ and let $\mathcal{P}$ be a $(k,l)$-partition for some $l$. Then there is a canonical $(k,l)$-partition $\mathcal{Q}=\mathcal{Q}(N,\mathcal{P})$ with $N=M(\mathcal{P},\mathcal{Q})$ if and only if all row and column sums of $N$ are equal to $l$.
\end{lemma}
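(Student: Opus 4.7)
The plan is to verify the forward direction (which is essentially already in the preceding paragraph) and then give an explicit canonical construction for the converse.

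For the forward direction, if $N = M(\mathcal{P},\mathcal{Q})$ then $n_{ij} = |P_i \cap Q_j|$, and summing over $j$ gives $\sum_j n_{ij} = |P_i| = l$ since the $Q_j$ partition $[kl]$, and analogously for column sums. This is exactly the computation shown in the paragraph preceding the lemma.

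For the converse, suppose $N$ has all line sums equal to $l$, and let $\mathcal{P} = \{P_1,\dots,P_k\}$, where we order the parts by smallest element. The idea is to split each $P_i$ into $k$ consecutive chunks of sizes $n_{i1},\dots,n_{ik}$ and declare $Q_j$ to be the union over $i$ of the $j$-th chunk. Explicitly, list the elements of $P_i$ in increasing order as $p_{i,1} < p_{i,2} < \dots < p_{i,l}$, and for each $j$ set
\[
P_i^{(j)} := \{p_{i,t} : n_{i,1}+\dots+n_{i,j-1} < t \leq n_{i,1}+\dots+n_{i,j}\},
\]
so $|P_i^{(j)}| = n_{ij}$. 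Since $\sum_j n_{ij} = l = |P_i|$, the sets $P_i^{(1)},\dots,P_i^{(k)}$ partition $P_i$. Now define $Q_j := \bigcup_{i=1}^k P_i^{(j)}$.

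The verification is routine. Disjointness of $Q_1,\dots,Q_k$ follows from the disjointness of the $P_i^{(j)}$ for fixed $i$ (together with the fact that the $P_i$ are disjoint). Each $Q_j$ has size $\sum_i n_{ij} = l$ by the column sum hypothesis, and $\bigcup_j Q_j = \bigcup_i P_i = [kl]$, so $\mathcal{Q}$ is a $(k,l)$-partition. By construction $P_i \cap Q_j = P_i^{(j)}$ has cardinality $n_{ij}$, so $M(\mathcal{P},\mathcal{Q}) = N$. Finally, the construction is canonical in the sense that, having fixed the ordering of $\mathcal{P}$ by least element and the usual order on $\mathbb{Z}$, no arbitrary choice was made; one could then reorder $\mathcal{Q}$ by least element to agree with the convention used in defining intersection matrices.

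There is no real obstacle here: the only thing worth highlighting is that "canonical" must be pinned down precisely, and the natural order on $[kl]$ together with the fixed ordering of the input partition $\mathcal{P}$ does the job. The row-sum condition ensures each $P_i$ can be cut into pieces of the prescribed sizes, and the column-sum condition ensures the resulting $Q_j$ have the correct size $l$; both are clearly necessary as already noted, so the equivalence is complete.
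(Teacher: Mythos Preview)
Your proof is correct and is essentially identical to the paper's: both split each $P_i$ into consecutive chunks of sizes $n_{i1},\dots,n_{ik}$ (using the natural order on $[kl]$) and take $Q_j$ to be the union of the $j$th chunks. The only cosmetic difference is that the paper phrases the construction iteratively (``the first $n_{i(j+1)}$ elements of $P_i\setminus\bigcup_{m\le j}Q_m$'') whereas you index the positions directly.
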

\begin{proof}
Set $Q_{1}=\bigcup_{i=1}^k R_{i1}$ where $R_{i1}$ is the first $n_{i1}$ elements of $P_i$. Given $Q_{1},Q_{2},\dots,Q_{j}$, define $Q_{j+1}=\bigcup_{i=1}^k R_{i(j+1)}$ where $R_{i(j+1)}$ is the first $n_{i(j+1)}$ elements of $P_{i}\setminus\left( \bigcup_{m=1}^j Q_{j}\right)$. This is possible since $$\left|P_{i}\setminus\left( \bigcup_{m=1}^j Q_{j}\right)\right|=l-\sum_{s\leq j} n_{is}=\sum_{s>j} n_{is},$$ for all $i$. Let $\mathcal{Q}:=\{Q_{1},\dots, Q_{k}\}$. By construction, the sets in $\mathcal{Q}$ are pairwise disjoint, and $|Q_{i}|=\sum_{s=1}^k n_{si}=l$ for all $i$, so $\mathcal{Q}$ is indeed a $(k,l)$-partition. Moreover, $|P_{i}\cap Q_{j}|=|P_{i}\cap R_{ij}|=n_{ij}$ for all $i,j$, so the result holds.
\end{proof}

We now show how to determine the stabiliser up to $\S_{kl}$-conjugacy of a pair of $(k,l)$-partitions from their intersection matrix. Let $a,b\in\mathbb{N}$. Then $\S_a\times \S_b$ acts on $\mathcal{M}_{a,b}$ via $$N^{(\sigma_1,\sigma_2)}=(n_{ij})^{(\sigma_1,\sigma_2)}=(n_{i^{\sigma_1} j^{\sigma_2}}).$$
Let $\mathcal{P}$ be a $(k,l)$-partition, and $H\leq (\S_{k\times l})_\mathcal{P}$. We say that $\mathcal{P}$ is \emph{partwise fixed} by $H$ if the induced action of $H$ on the parts of $\mathcal{P}$ is trivial.

\begin{lemma}\label{top}
Let $(\mathcal{P}_1,\mathcal{P}_2)$ be a pair of $(k,l)$-partitions with $N=M(\P_1,\P_2)$, let $K=\S_{k}\times \S_{k}$, and set $G=\S_{k\times l}$. 
Then the pointwise stabiliser $H:=G_{\mathcal{P}_1,\mathcal{P}_2}$ is permutation isomorphic to $$\left(\prod_{i,j\in [k]}\mathrm{Sym}(P_{1i}\cap P_{2j})\right) :  K_{N}.$$ In particular, $|H|=|K_{N}|\prod_{i,j}n_{ij}!$, and $H\cong G_{\Q_1,\Q_2}$ whenever $M(\Q_1,\Q_2)=N$. Moreover, $\P_1$ and $\P_2$ are both partwise fixed by $H$ if and only if $K_N=1$.
\end{lemma}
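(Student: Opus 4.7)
The plan is to define the natural homomorphism $\phi:H\to K$ induced by the action of $H$ on the parts of $\P_1$ and $\P_2$, identify its image with $K_N$ and its kernel with $\prod_{i,j}\mathrm{Sym}(P_{1i}\cap P_{2j})$, and then exhibit a splitting. Every $\sigma\in H$ setwise stabilises each of $\P_1$ and $\P_2$, so induces permutations $\sigma_1,\sigma_2\in\S_k$ on the indices of the parts via $\sigma(P_{1i})=P_{1,i^{\sigma_1}}$ and $\sigma(P_{2j})=P_{2,j^{\sigma_2}}$. Since $\sigma$ bijects $P_{1i}\cap P_{2j}$ onto $P_{1,i^{\sigma_1}}\cap P_{2,j^{\sigma_2}}$, we obtain $n_{ij}=n_{i^{\sigma_1},j^{\sigma_2}}$, placing $(\sigma_1,\sigma_2)$ in $K_N$.

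The kernel of the resulting homomorphism $\phi:\sigma\mapsto(\sigma_1,\sigma_2)$ consists of those $\sigma$ preserving each $P_{1i}\cap P_{2j}$ setwise, and hence is exactly $\prod_{i,j}\mathrm{Sym}(P_{1i}\cap P_{2j})$. For surjectivity and a splitting, given $(\sigma_1,\sigma_2)\in K_N$ the identity $n_{ij}=n_{i^{\sigma_1},j^{\sigma_2}}$ ensures that for each $(i,j)$ there is a canonical, order-preserving bijection $P_{1i}\cap P_{2j}\to P_{1,i^{\sigma_1}}\cap P_{2,j^{\sigma_2}}$. Concatenating these bijections over all $(i,j)$ yields a permutation of $[kl]$ which lies in $H$ and maps to $(\sigma_1,\sigma_2)$; using the order-preserving choice throughout produces a genuine group-theoretic section of $\phi$. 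Thus $H$ is the claimed semidirect product, and $|H|=|K_N|\prod_{i,j}n_{ij}!$ follows immediately.

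For the two remaining assertions: given any $(\Q_1,\Q_2)$ with $M(\Q_1,\Q_2)=N$, picking a bijection of $[kl]$ that matches each intersection $P_{1i}\cap P_{2j}$ with the corresponding intersection for $(\Q_1,\Q_2)$ conjugates $G_{\P_1,\P_2}$ to $G_{\Q_1,\Q_2}$ inside $G$, witnessing the permutation isomorphism. Finally, $\P_1$ and $\P_2$ are both partwise fixed by $H$ iff $\phi(\sigma)=(e,e)$ for every $\sigma\in H$, and by surjectivity of $\phi$ this happens iff $K_N=1$. The only point requiring any care is the directional bookkeeping: one must check that the convention $(\sigma_1,\sigma_2)\in K_N\iff n_{ij}=n_{i^{\sigma_1},j^{\sigma_2}}$ is consistent with the sense in which $\sigma_1,\sigma_2$ act on part-indices. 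This is immediate from the definitions, so no genuine obstacle arises.
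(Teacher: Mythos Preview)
Your proof is correct and follows essentially the same approach as the paper: the paper considers the intransitive representation $H\to\mathrm{Sym}(\P_1)\times\mathrm{Sym}(\P_2)$, identifies its kernel as $\prod_{i,j}\mathrm{Sym}(P_{1i}\cap P_{2j})$, and remarks that ``fixing an ordering on the points of $[kl]$ yields a complement to $\ker\rho$ isomorphic to $K_N$''. Your construction of the section via order-preserving bijections is exactly this, spelled out in full, and your handling of the remaining assertions is likewise the natural elaboration of what the paper leaves implicit.
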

\begin{proof}
The kernel of the intransitive representation $\rho:H\to \mathrm{Sym}(\P_1)\times \mathrm{Sym}(\P_2)$ is  $$\ker\rho=\prod_{i,j\leq k}\mathrm{Sym}(P_{1i}\cap P_{2j});$$ fixing an ordering on the points of $[kl]$ yields a complement to $\ker \rho$ isomorphic to $K_N$.
\end{proof}

From here on we use $K$ to denote the group $\S_a\times \S_b$ when $a$ and $b$ are clear from context, and call an $a\times b$ matrix $N$ \emph{partwise fixed} if $K_N=1$. By Lemma~\ref{top}, $M(\P_1,\P_2)$ is partwise fixed if and only if $\P_1$ and $\P_2$ are partwise fixed by $(\S_{k\times l})_{\P_1,\P_2}$.

We now connect $(k,l)$-partitions and $(k,l)$-bipartite graphs. For any positive integers $a$ and $b$ we can view $N=(n_{ij})\in\M_{a,b}$ as a bipartite adjacency matrix with rows and columns indexed by black and white vertices, respectively, where $n_{ij}$ is the multiplicity of the edge between the $i$th black vertex and $j$th white vertex. Let $\Gamma$ be a graph with adjacency matrix $N$. Then $\Aut(\Gamma)=\Aut_{E}(\Gamma) :  \Aut_V(\Gamma)$ where $\Aut_{E}(\Gamma)$ is all automorphisms of $\Gamma$ which fix all vertices, and $\Aut_V(\Gamma)$ is a complement giving the induced action on vertices. If $N$ is a $(k,l)$-intersection matrix, then $\Gamma$ is a $(k,l)$-bipartite graph, $\Aut_{E}(\Gamma)\cong \prod \S_{n_{ij}}$, and $\Aut_{V}(\Gamma)\cong K_N$, whence \begin{equation}\label{autg}\tag{$**$}\Aut(\Gamma)=\Aut_E(\Gamma) :  \Aut_V(\Gamma)\cong \left(\prod\S_{n_{ij}}\right) :  K_N.\end{equation} The following lemma is therefore immediate. Recall first the definition~\eqref{chi} of $\chi(k,l)$.
\begin{lemma}\label{minpq}
Let $k$ and $l$ be integers each at least 2. Then $$\chi(k,l)=\min\{|(\S_{k\times l})_{\P,\Q}| : \P,\Q \text{ $(k,l)$-partitions}\}.$$
\end{lemma}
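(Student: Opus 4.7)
The plan is to observe that both sides of the claimed equality are minima of the same function of a $(k,l)$-intersection matrix, so it suffices to show the two families of matrices arising are the same.

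First, I would invoke Lemma \ref{top} to rewrite the right-hand side: for any pair of $(k,l)$-partitions $(\P,\Q)$ with intersection matrix $N=(n_{ij})=M(\P,\Q)$, we have $|(\S_{k\times l})_{\P,\Q}|=|K_N|\prod_{i,j} n_{ij}!$, where $K_N$ is the stabiliser of $N$ under the natural $(\S_k\times\S_k)$-action. Next I would use equation \eqref{autg}: for any $(k,l)$-bipartite graph $\Gamma$ with bipartite adjacency matrix $N'=(n'_{ij})$, we have $|\Aut(\Gamma)|=|K_{N'}|\prod_{i,j} n'_{ij}!$, which is exactly the same formula.

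It remains to check that as $(\P,\Q)$ ranges over all pairs of $(k,l)$-partitions, the matrix $M(\P,\Q)$ ranges over precisely the same set as the bipartite adjacency matrices of $(k,l)$-bipartite graphs, namely the set of $(k,l)$-intersection matrices. One direction is immediate from the definition of $M(\P,\Q)$ together with the row/column sum calculation already done before Lemma \ref{mat}; the reverse direction is exactly Lemma \ref{mat}, which produces, given any $N\in\M_{k,k}$ with all line sums equal to $l$ and any fixed $(k,l)$-partition $\P$, a partition $\Q=\Q(N,\P)$ with $M(\P,\Q)=N$. Hence the multisets
\[\{|(\S_{k\times l})_{\P,\Q}| : \P,\Q\text{ $(k,l)$-partitions}\}\quad\text{and}\quad\mathcal{A}(k,l)\]
coincide, and in particular have the same minimum. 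There is no real obstacle here; everything is already packaged in Lemmas \ref{mat} and \ref{top} and equation \eqref{autg}, and the proof is essentially a one-line application of these.
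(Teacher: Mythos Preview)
Your proposal is correct and matches the paper's approach exactly: the paper also derives this lemma as an immediate consequence of Lemma~\ref{mat}, Lemma~\ref{top}, and equation~\eqref{autg}, simply stating ``the following lemma is therefore immediate'' after assembling those three ingredients. One tiny quibble: you should say the two \emph{sets} of values coincide rather than the two multisets (different numbers of pairs $(\P,\Q)$ and of graphs $\Gamma$ may give rise to the same matrix $N$), but this does not affect the minimum and hence does not affect the argument.
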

We now obtain some estimates of products of factorials. 
Let $s$ and $t$ be positive integers. Consider a sequence of \emph{multiplicities} $m_0,m_1,m_2,\dots,m_d$ with $m_0=0$ and $\sum m_i<t$, and a sequence of \emph{ranks} of the form $$r_1>r_2>\cdots> r_d>\lceil s/t\rceil\quad\text{ or }\quad r_1<r_2<\cdots< r_d<\lfloor s/t \rfloor,$$ such that $\sum m_ir_i<s$.  Let $X=[(m_1,r_1),(m_2,r_2),\dots, (m_d,r_d)]$ 
 --- we call $X$ a \emph{multiplicity sequence}. If $A_0$ is the multiset with $m_i$ entries equal to $r_i$ for each $i\geq 1$, then $A_0$ can be completed to a multiset $A$ of size $t$ with sum $s$ by adding only elements equal to either the floor or ceiling of the average required value, which is $$x=x(s,t,X):=\frac{s-\left(\sum m_ir_i\right)}{t-\sum m_i}.$$ 
 Let $b=b(s,t,X)\leq t-\sum m_{i}$ be a positive integer satisfying $$s-\sum m_ir_i=b\lceil x\rceil +\left(t-\left(\sum m_i\right)-b\right)\lfloor x\rfloor,$$ and set $S=S(s,t,X)$ to be the collection of all non-negative integer multisets $A$ of size $t$ with sum $s$ such that for every $1\leq i\leq d$ the multiset $A$ has at least $\sum_{j=1}^i m_j$ entries which are at least $r_i$ if $r_i> \lceil s/t\rceil$ or at most $r_i$ if $r_i<\lfloor s/t\rfloor$.

\begin{lemma}\label{minN}
Let $s$ and $t$ be positive integers and let $X$ be a multiplicity sequence. Set $x=x(s,t,X)$, $b=b(s,t,X)$, and $S=S(s,t,X)$. Then $\min\left\{\prod_{a\in A} a! : A\in S\right\}$ is achieved uniquely by the integer multiset $B\in S$ containing each rank with its associated multiplicity, and all other elements as close to each other as possible. Thus, this minimum is precisely $$\left(\prod_{i}r_i!^{m_i}\right)\lceil x\rceil!^b\lfloor x\rfloor!^{t-\left(\sum m_i\right)-b}.$$ 
\end{lemma}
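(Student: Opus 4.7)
The plan is to prove the lemma by a standard convexity-based swap argument. The fundamental observation is that whenever $u \geq v + 2$ in a multiset, the ratio $(u-1)!(v+1)!/(u!v!) = (v+1)/u$ is strictly less than $1$, so pushing the two entries together strictly decreases $\prod_{a \in A} a!$. I would show that any $A \in S$ distinct from $B$ admits such a swap that also preserves condition (iii), so $B$ is the unique minimiser.

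I will treat the decreasing-ranks case (the increasing case is symmetric). Let $A^* \in S$ be a minimiser and sort its entries as $a^*_1 \geq \cdots \geq a^*_t$; writing $M_i := \sum_{j=1}^i m_j$, condition (iii) becomes $a^*_{M_i} \geq r_i$ for each $i$. The first step is to establish $a^*_1 = r_1$. Suppose instead $a^*_1 \geq r_1 + 1 \geq \lceil s/t \rceil + 2$. If every other entry were $\geq a^*_1 - 1$, then
$$s = \sum_{j=1}^t a^*_j \geq t a^*_1 - (t-1) \geq t(\lceil s/t \rceil + 2) - (t-1) \geq s + t + 1,$$
a contradiction; so some $a^*_j \leq a^*_1 - 2$ exists. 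The swap $(a^*_1, a^*_j) \mapsto (a^*_1 - 1, a^*_j + 1)$ strictly decreases the product, and since $a^*_1 - 1 \geq r_1 \geq r_i$ for every $i$ while raising $a^*_j$ cannot decrease any ``$\geq r_i$'' count, condition (iii) is preserved, contradicting minimality. Hence $a^*_1 \leq r_1$, and combined with $a^*_{M_1} \geq r_1$ this forces $a^*_1 = \cdots = a^*_{M_1} = r_1$.

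Next I would recurse on the sub-multiset of remaining $t - m_1$ entries summing to $s - m_1 r_1$, with multiplicity sequence $[(m_2, r_2), \ldots, (m_d, r_d)]$. The hypotheses transfer cleanly: since $r_1 > s/t$, the new average $s'/t' = (s - m_1 r_1)/(t - m_1)$ is strictly less than $s/t$, so $r_d > \lceil s/t \rceil \geq \lceil s'/t' \rceil$ and the remaining ranks are still strictly above the new average, while $x(s,t,X) = x(s', t', X')$ is unchanged. Iterating yields $a^*_j = r_i$ for every $j$ with $M_{i-1} < j \leq M_i$. Finally, the unconstrained tail of $t - \sum m_i$ entries can contain no pair differing by at least $2$ (else a swap would reduce the product), so each lies in $\{\lfloor x \rfloor, \lceil x \rceil\}$, with the exact split of $b$ ceilings and $t - (\sum m_i) - b$ floors forced by the sum. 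This identifies $A^*$ uniquely with $B$ and yields the stated product.

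The only real obstacle is bookkeeping: one must verify at each swap that every threshold in (iii) is preserved, which reduces to the trivial observations that $a^*_1 - 1$ still exceeds every $r_i$ and that raising any entry never diminishes a ``$\geq r_i$'' count. The increasing-ranks case runs identically after sorting in increasing order and replacing every ``$\geq r_i$'' by ``$\leq r_i$'' throughout.
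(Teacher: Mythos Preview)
Your argument is correct and rests on the same convexity swap as the paper: whenever two entries differ by at least $2$, replacing $(u,v)$ by $(u-1,v+1)$ strictly decreases $\prod a!$ while keeping the multiset in $S$. The paper packages this more compactly --- rather than peeling off rank layers inductively, it sorts a minimiser $A$ and the target $B$ together, notes (via condition~(iii) and the structure of $B$) that some indices $j<h$ satisfy $a_j>b_j$ and $a_h<b_h$, and performs a single swap on $a_j,a_h$; your layered version is equally valid but longer.
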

\begin{proof}
We prove the result for $r_d>\lceil s/t\rceil$; the other case is similar. Let $A\in S$ be such that $\prod_{a\in A} a!$ is minimum, and suppose $A\ne B$. 
Order the elements of $A=\{a_1,a_2,\dots, a_t\}$ and $B=\{b_1,b_2,\dots, b_t\}$ in non-increasing order so that $$a_1,b_1,a_2,b_2,\dots ,a_{\sum_{j\leq i} m_j},b_{\sum_{j\leq i} m_j}\geq r_i\text{ for all $i$.}$$ Since $\sum a_i=\sum b_i=s$ and both $A$ and $B$ are indexed in non-increasing order, there exist indices $j<h$ such that $a_j>b_j$ and $a_h<b_h$. In particular $a_j>a_h+1$. Let $A'$ be the multiset obtained from $A$ by replacing $a_j$ with $a_j-1$, and $a_h$ with $a_h+1$. Then $A'\in S$ and $$\prod_{a\in A'} a!=\frac{(a_h+1)!(a_{j}-1)!}{a_h!a_j!}\prod_{a\in A} a!=\frac{a_h+1}{a_j}\prod_{a\in A} a!<\prod_{a\in A} a!,$$ a contradiction, hence the result.\end{proof}

Given $N\in \mathcal{M}_{a,b}$, write $R_i(N)$ and $C_i(N)$ (or just $R_i$ and $C_i$) for the $i$th row and column of $N$, respectively. Let $\pi_1$ and $\pi_2$ be the coordinate projections $\S_a\times \S_b\to \S_a$ and $\S_a\times \S_b\to S_b$. Additionally, let $N^*$ denote the multiset of entries of $N$.
\begin{lemma}\label{fixlem}
Let $N\in \mathcal{M}_{a,b}$. If $\sigma=(\rho,\gamma)\in K_N$ satisfies $i^{\rho}=j$, then $R_i^*=R_j^*$. Moreover, if all columns of $N$ are distinct and $H\leq K_N$ is such that $H\pi_1=1$, then $H\pi_2=1$. Both facts hold with the roles of rows and columns swapped.
\end{lemma}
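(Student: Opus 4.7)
The plan is to unpack the definition of the $K$-action on $\M_{a,b}$ and translate the stabiliser condition $\sigma \in K_N$ into a statement about individual rows and columns.

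For the first claim, write $\sigma = (\rho, \gamma) \in K_N$, which by definition of the action means $n_{i^\rho j^\gamma} = n_{ij}$ for all $i \in [a]$, $j \in [b]$. Specialising to $i^\rho = j$ gives $n_{j, k^\gamma} = n_{i,k}$ for every column index $k$. As $k$ ranges over $[b]$, so does $k^\gamma$, so the multiset $\{n_{i,k} : k \in [b]\}$ equals the multiset $\{n_{j,k'} : k' \in [b]\}$, which is exactly $R_i^* = R_j^*$.

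For the second claim, every $\sigma \in H$ is of the form $(1, \gamma)$ because $H\pi_1 = 1$. Being in $K_N$ with $\rho = 1$ gives $n_{i, j^\gamma} = n_{i,j}$ for all $i, j$, i.e.\ column $j^\gamma$ of $N$ coincides entry-by-entry with column $j$, for every $j$. Since the columns of $N$ are distinct by hypothesis, $\gamma$ must fix every column index, so $\gamma = 1$. Hence $H\pi_2 = 1$. The ``rows and columns swapped'' versions follow by the same argument applied to $N^T \in \M_{b,a}$, using that transposition intertwines the two coordinate projections.

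The argument is essentially a one-line unpacking of the definitions, so no real obstacle is expected; the only thing to take care of is the direction of the action (that $N^{(\rho,\gamma)}$ has $(i,j)$-entry $n_{i^\rho j^\gamma}$, not $n_{i^{\rho^{-1}} j^{\gamma^{-1}}}$), which determines how $\rho$ moves the role ``row $i$'' to ``row $i^\rho$''.
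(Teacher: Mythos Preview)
Your proof is correct and follows essentially the same approach as the paper: the paper dismisses the first claim as clear and for the second claim takes $(1,\tau)\in H$ with $i^\tau=j$, observes that $(n_{1i},\dots,n_{ai})=(n_{1j},\dots,n_{aj})$, and concludes $i=j$ from distinctness of columns. Your write-up simply unpacks the ``clear'' first step and makes the transpose remark explicit.
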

\begin{proof}
The first claim is clear. Suppose the columns of $N$ are distinct and $H\pi_1=1$. Let $(1,\tau)\in H$, and suppose $i^{\tau}=j$. Then $$(n_{1i},n_{2i},\dots,n_{ai})=(n_{1i},n_{2i},\dots,n_{ai})^{(1,\tau)}=(n_{1j},n_{2j},\dots,n_{aj}),$$ but each column is distinct, so $j=i$.
\end{proof}
In Section~\ref{minaut} we shall construct matrices which are nearly cyclic shifts of some tuple. 
Define the \emph{cyclic-shift matrix} of $v\in\mathbb{Z}_{\geq 0}^k$, written $\theta(v)\in\M_{k,k}$, to satisfy $R_i(\theta(v))=v^{(k\,k-1\,\cdots\,1)^i}$ for $0\leq i\leq k-1$. Notice that if $\sum v_i=l$ then $\theta(v)$ is a $(k,l)$-intersection matrix with $|K_{\theta(v)}|\geq k$.

A matrix $N$ is a \emph{truncated staircase} if $N$ is the first $t<k$ rows of $\theta(v)$, where $v=(x,y,z,z,z,\dots,z)\in\Z^k$ for some $x,y\ne z$. A truncated staircase is \emph{weak} if $x=y$, and \emph{strong} if $x\ne y$. For $\gamma\in\S_k$, define $\supp(\gamma):=\{x\in [k] : x^\gamma\ne x\}$.

\begin{lemma}\label{stair}
Let $N$ be a $t\times k$ truncated staircase with $t\geq 2$.\begin{enumerate}
\item[(i)] If $N$ is weak then $[t]\times[t+1]$ is a union of $K_N$-orbits and $K_N|_{[t]\times [t+1]}=\langle \sigma\rangle\cong \mathrm{C}_2$, where $(i,j)^{\sigma}=(t+1-i,t+2-j)$ for all $1\leq i\leq t$ and $1\leq j\leq t+1$. In particular, if $(\rho,\gamma)\in K_N$, then either $\rho=1$, or $j\in\supp(\gamma)$ for each $j\in [t+1]\setminus \{(t+2)/2\}$.
\item[(ii)] If $N$ is strong then $K_N\pi_1=1$.
\end{enumerate}
\end{lemma}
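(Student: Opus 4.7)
My plan is to read off the very rigid column structure of a truncated staircase and let it dictate $K_N$. By construction the matrix $N$ has its non-$z$ entries only in the first $t+1$ columns: column~$1$ contains a single $x$ at row~$1$; column~$t+1$ contains a single $y$ at row~$t$; each column $j\in\{2,\dots,t\}$ contains a $y$ at row $j-1$ and an $x$ at row~$j$; and columns $t+2,\dots,k$ are entirely $z$. Any $(\rho,\gamma)\in K_N$ must send each column to one with the same entry multiset, and then $\rho$ must permute rows compatibly with the chosen $\gamma$.

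For~(i) I would first observe that when $x=y$ the columns fall into three multiset classes: $\{x,z^{t-1}\}$ on $\{1,t+1\}$, $\{x,x,z^{t-2}\}$ on $\{2,\dots,t\}$, and $\{z^{t}\}$ on $\{t+2,\dots,k\}$. Hence $\gamma$ preserves the partition of columns into these three sets, and since columns outside $[t+1]$ are literally identical they can be freely permuted without imposing any constraint on $\rho$; this immediately shows $[t]\times[t+1]$ is a union of $K_N$-orbits. To identify $K_N|_{[t]\times[t+1]}$ I would split on whether $\gamma$ fixes $\{1,t+1\}$ pointwise or swaps them. In the fixed case, the unique $x$ in column~$1$ forces $1^{\rho}=1$, and scanning columns $2,3,\dots,t+1$ in order (each $x$-pair being $\{j-1,j\}$) pins $\rho$ and $\gamma|_{[t+1]}$ inductively to the identity. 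In the swap case $1^{\rho}=t$ and $t^{\rho}=1$, and an analogous induction yields the reversal $\rho:i\mapsto t+1-i$, $\gamma:j\mapsto t+2-j$ on $[t+1]$; this is exactly the claimed $\sigma$. The support statement then drops out: $j=t+2-j$ iff $j=(t+2)/2$.

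For~(ii), with $x\ne y$ the three column multisets $\{x,z^{t-1}\}$, $\{y,z^{t-1}\}$, $\{x,y,z^{t-2}\}$ are pairwise distinct, so any $\gamma$ arising in $K_N$ must fix columns~$1$ and~$t+1$. The lone $x$ in column~$1$ then forces $1^{\rho}=1$, and a one-by-one induction on $j\in\{2,\dots,t+1\}$---using $x\ne y$ to distinguish the $x$-row from the $y$-row within each column---forces $\gamma(j)=j$ and $j^{\rho}=j$ throughout. Therefore $\rho=1$, giving $K_N\pi_1=1$.

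The only mildly delicate point is organising the two simultaneous inductions in~(i) when $\gamma$ swaps $\{1,t+1\}$, so that $\rho$ and $\gamma$ are tracked consistently step by step; everything else is routine bookkeeping with the staircase pattern, and part~(ii) is then essentially the same induction run without the reversal branch.
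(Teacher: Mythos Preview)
Your approach is correct and in fact more direct than the paper's. The paper proceeds differently: it first reduces (ii) to (i) by observing that if $A$ is the weak staircase obtained from a strong staircase $B$ by identifying $x$ with $y$, then $K_B\leq K_A$, and the one nontrivial element $\sigma$ of $K_A|_{[t]\times[t+1]}$ fails to preserve $B$ since $b_{1,1}=x\ne y=b_{t,t+1}$. For (i) the paper then reduces to $k=t+1$ and argues by induction on $t$: it notes that $\gamma$ must stabilise $\{1,t+1\}$, deletes the first and last columns, and transposes to obtain a $(t-1)\times t$ weak truncated staircase to which the inductive hypothesis applies.

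Your argument avoids both the transpose trick and the reduction of (ii) to (i) by running an elementary column-by-column induction on $j\in[t+1]$, using at each step that $\{j-1,j\}^{\rho}$ must equal the support of the non-$z$ entries in column $j^{\gamma}$, and that the already-determined value of $(j-1)^{\rho}$ pins $j^{\gamma}$ and then $j^{\rho}$ uniquely. This is cleaner bookkeeping and handles the strong case with the same mechanism rather than via an embedding. One small point you should make explicit: in (i) you have shown that $K_N|_{[t]\times[t+1]}\subseteq\{1,\sigma\}$, but to conclude equality you still need to observe that the reversal $(\rho,\gamma)$ with $\rho:i\mapsto t+1-i$ and $\gamma$ equal to $j\mapsto t+2-j$ on $[t+1]$ and the identity on $\{t+2,\dots,k\}$ really lies in $K_N$, which is an immediate symmetry check on the staircase pattern.
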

\begin{proof}
If $A=(a_{ij})_{t\times k}$ is a weak truncated staircase, and $B=(b_{ij})_{t\times k}$ a strong truncated staircase, then $K_B\leq K_A$, and moreover, $b_{11}=x\ne b_{t(t+1)}$, whence it suffices to prove the first statement.

 Since $C_1,C_2,\dots, C_{t+1}$ are the only columns which contain an $x$, we deduce from Lemma~\ref{fixlem} that $[t+1]$ is a union of $K_N\pi_2$-orbits.
Therefore, we may assume $k=t+1$. We induct on $t$.

The result is true when $t=2$, so assume $t\geq 3$. Suppose $\sigma=(\rho,\gamma)\in K_N$ is non-trivial. Since $C_1$ and $C_{t+1}$ are the only columns with exactly one $x$, it follows that $\{1,t+1\}^{\gamma}=\{1,t+1\}$. If $1^{\gamma}=1$, then  $\sigma|_{[t]\times ([t]\setminus\{1\})}$ is non-trivial. Similarly if $1^{\gamma}=t+1$, then since $n_{11}$ is the $x$ in $C_1$, and $n_{t(t+1)}$ is the only $x$ in $C_{t+1}$ we deduce that $1^{\rho}=t$, hence $\sigma|_{[t]\times ([t]\setminus\{1\})}$ is non-trivial. Deleting the first and last columns of $N$ and then transposing gives a $(t-1)\times t$ weak truncated staircase $T$. Since $\sigma|_{[t]\times ([t]\setminus\{1\})}$, is non-trivial we deduce that $\sigma^{\top}:=(\gamma,\rho)$ induces a non-trivial permutation in $(\mathrm{Sym}([t]\setminus\{1\})\times \S_{t})_T$, and so, by the inductive hypothesis, $(i,j)^{\sigma^{\top}}=(t+2-i,t+1-j)$ for all $2\leq i\leq t$, and $1\leq j\leq t$. But then, since $n_{11}\ne n_{t1}$ we deduce $(1^{\gamma},(t+1)^{\gamma})=(t+1,1)$, hence the result.
\end{proof}

From here forward, we use $G$ in place of $\S_{k\times l}$ when $k$ and $l$ are clear from context. 
Given a $(k,l)$-bipartite graph $\Gamma$ with adjacency matrix $N=M(\P,\Q)$, we denote the group $\Aut(\Gamma)\cong G_{\P,\Q}$ by $G(N)$.

There is one last class of matrices which we shall frequently encounter in Section~\ref{minaut} --- we show that they yield several vertex automorphisms.
\begin{lemma}\label{centbound}
Let $k\geq3$ and $q$ be positive integers, and $G=\S_{k\times kq}$. Let $N=(n_{ij})_{k\times k}$ be a $(k,qk)$-intersection matrix with largest entry $q+1$, occurring exactly once in each row and column. Then $|K_N|\geq 3$ and  $|G(N)|\geq (q+1)!^kq!^{k^2-2k}(q-1)!^k\cdot 3$.
\end{lemma}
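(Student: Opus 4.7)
The plan is to pin down the structure of $N$ exactly and then identify $K_N$ with the centralizer in $\S_k$ of a derangement.

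First I would determine the multiset of entries of $N$. Since each row sums to $qk$ and contains exactly one entry equal to $q+1$, its remaining $k-1$ entries sum to $q(k-1)-1$. As $q+1$ is the largest entry, these remaining entries lie in $\{0,1,\dots,q\}$, and the only way to attain the row-sum is to have exactly one entry equal to $q-1$ and $k-2$ entries equal to $q$. The same argument applied to the columns gives the analogous statement. Hence the $q+1$ entries of $N$ form a permutation pattern $\sigma_+$ (defined by $n_{i,\sigma_+(i)} = q+1$), the $q-1$ entries form a permutation pattern $\sigma_-$, and the remaining $k^2-2k$ entries all equal $q$. Set $\tau := \sigma_- \sigma_+^{-1}$; since $\sigma_+(i) \neq \sigma_-(i)$ for every $i$, $\tau$ is a derangement in $\S_k$.

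Next I would compute $|K_N|$. Any $(\rho,\gamma) \in K_N$ must setwise preserve both the $q+1$ positions and the $q-1$ positions. Unwinding the defining relation $n_{i^\rho, j^\gamma} = n_{ij}$ on these two sets yields the conditions $\gamma = \sigma_+^{-1}\rho\sigma_+$ and $\gamma = \sigma_-^{-1}\rho\sigma_-$, which together force $\rho \in C_{\S_k}(\tau)$. Conversely, for any $\rho \in C_{\S_k}(\tau)$ the pair $(\rho,\sigma_+^{-1}\rho\sigma_+)$ lies in $K_N$, since the remaining positions all contain $q$ and are therefore preserved automatically. Hence $|K_N| = |C_{\S_k}(\tau)|$.

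To finish, I would show $|C_{\S_k}(\tau)| \geq 3$. Since $\langle\tau\rangle \leq C_{\S_k}(\tau)$, this is immediate whenever $|\tau| \geq 3$. The only remaining case is when $\tau$ is a product of $m$ disjoint transpositions, so $k = 2m$; here $k \geq 3$ forces $m \geq 2$, giving $|C_{\S_k}(\tau)| = 2^m m! \geq 8$. Combining with \eqref{autg} then yields $|G(N)| = |K_N| \cdot \prod_{i,j} n_{ij}! \geq 3\cdot(q+1)!^k q!^{k^2-2k}(q-1)!^k$, as required. The main obstacle is establishing the bijection $K_N \leftrightarrow C_{\S_k}(\tau)$ cleanly; once the rigidity of $N$ is in hand, the centralizer bound and the factorial computation are short.
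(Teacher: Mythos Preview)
Your proof is correct and follows essentially the same approach as the paper. The only cosmetic differences are that the paper first reorders rows and columns so that your $\sigma_+$ becomes the identity (whence your $\tau$ is simply $\sigma_-$), and the paper asserts the centralizer bound for a permutation of full support without the explicit order/involution case split you give.
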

\begin{proof}
For all $i$, since $R_i$ has exactly one entry greater than $q$, and moreover this entry equals $q+1$, it follows that $R_i$ is a permutation of $(q+1,q-1,q,q,\dots,q)$. Since all row and column sums are precisely $qk$, by reordering the rows and columns there is some $\gamma\in \S_k$ of full support such that $n_{ii}=q+1$ and $n_{i(i^\gamma)}=q-1$. Therefore, $(\sigma,\tau)\in K_N$ if and only if $\sigma=\tau$ and for each $i$ there is some $j$ such that $(i^\sigma,i^{\gamma\sigma})=(j,j^{\gamma})$. In other words, $\sigma=\gamma\sigma\gamma^{-1}$, so $K_N\cong C_{\S_k}(\gamma)$. But the centraliser of any element of full support in $\S_k$ has order at least $3$, hence the result.
\end{proof}

\section{Minimising the number of automorphisms}\label{minaut}
In this section we determine the quantity $\chi(k,l)$ from definition \eqref{chi} for each $(k,l)$ with $k\geq 8$ and $l\geq 2$. We are only concerned with generic behaviour (see Section~\ref{probaut}), so the main result is stated for $k\geq 8$, although several individual cases will be shown for smaller $k$. As in Section~\ref{prelim}, we set $G=\S_{k\times l}$ and $K=\S_k\times \S_k$, when $k$ and $l$ are clear.

 We begin by determining $\chi(k,2)$ for $k\geq 3$. The action of $\S_{k\times 2}$ is equivalent to the conjugation action of $\mathrm{S}_{2k}$ on the class $\mathcal{I}$ of fixed-point free involutions.  
This equivalence implies that each 2-point stabiliser in $\S_{k\times 2}$ is the $\S_{2k}$-centraliser of a subgroup generated by two elements of $\mathcal{I}$, hence we are interested in determining the minimum order of such centralisers. Let $\mathrm{D}_{2s}$ denote the dihedral group of order $2s$, with $\mathrm{D}_2=\mathrm{C}_2$ and $\mathrm{D}_4=\mathrm{C}_2^2$.
\begin{proposition}\label{di}
Let $x,y\in \mathcal{I}$ be distinct, and let $H=\langle x,y\rangle$. Then $$C:=C_{\S_{2k}}(H)=\prod_{i=1}^m(\mathrm{D}_{2k_i}\wr \S_{l_i}),$$ for some $m,k_i,l_i\geq 1$ with the $k_i$ distinct and $\sum_{i=1}^m k_il_i=k$. In particular, $\chi(k,2)=2k$ for all $k\geq 3$.
\end{proposition}

\begin{proof}
Build an edge-coloured graph $\Gamma$ with vertex set $[2k]$, red edge $ij$ included if and only if $(i\,j)$ is a transposition in $x$, and blue edge $ij$ included if and only if $(i\,j)$ is a transposition in $y$. Every vertex is incident with exactly one blue and one red edge, and so each connected component is an alternating red-blue cycle of even length. The involution $x$ swaps the endpoints of red edges, and $y$ swaps the endpoints of blue edges, so the connected components of $\Gamma$ are the orbits of $H$ on $[2k]$. 

Let $\Delta_1$ and $\Delta_2$ be connected components of $\Gamma$ of the same size. Then $\Delta_1$ and $\Delta_2$ are isomorphic edge-coloured graphs. Any isomorphism between them is invariant under the actions of $x$ and $y$, whence the actions of $H$ on $\Delta_1$ and $\Delta_2$ are equivalent. 

Since $H=\langle x,y\rangle$, the restriction of $H$ to any component $\Delta_i$ of size $2k_i$ is dihedral of order $2k_i$, acting regularly. Therefore, $$C_{\mathrm{Sym}(\Delta_i)}(H|_{\Delta_i})=C_{\mathrm{Sym}(\Delta_i)}(\mathrm{D}_{2k_i})=\mathrm{D}_{2k_i}.$$ Thus, setting $l_i$ to be the number of components of $\Gamma$ of size $2k_i$, and $m$ the number of distinct component sizes, we deduce $C=\prod_{i=1}^m(\mathrm{D}_{2k_i}\wr \S_{l_i})$ by \cite[Lemma 6.1.8]{ser}.

Suppose that $m=1$, so that $k_1l_1=k$. Then $$|C|=\prod_{i=1}^m(2k_i)^{l_i}l_i!=2k(2k_1)^{l_1-1}(l_1-1)!\in 2k\mathbb{N},$$ and so the smallest possible value when $m=1$ is $2k$, attained when $l_1=1$. Suppose next that $m\geq 2$. Then $\prod_{i=1}^m(2k_i)^{l_i}l_i!\geq 2^m\prod_{i=1}^mk_il_i$. Suppose $k_1l_1=1$, so that $\sum_{i=2}^mk_il_i=k-1$. Then $$2^m\prod_{i=1}^mk_il_i=2^m\prod_{i=2}^mk_il_i\geq2^m\sum_{i=2}^mk_il_i\geq4(k-1)\geq2k,$$ where the first inequality follows from the fact that there is at most one index $j$ such that $l_jk_j=1$. If there is no such $j$, then $2^m\prod_{i=1}^mk_il_i\geq2^m\sum_{i=1}^mk_il_i\geq4k$.
\end{proof}

Let $J_k$ be the $k\times k$ all-one matrix; we omit the subscript when clear from context. If $N$ is any partwise fixed matrix (as defined in Section~\ref{prelim}) then $N+\lambda J$ is also partwise fixed for all $\lambda\in \mathbb{N}$. 
This is key to the results to follow, each of which is stated in the language of $(k,l)$-bipartite graphs, although the proofs are via their bipartite adjacency matrices. Recall from \eqref{autg} that if a $(k,l)$-bipartite graph $\Gamma$ has bipartite adjacency matrix $N$, then $$\Aut(\Gamma)\cong \left(\prod\S_{n_{ij}}\right) :  K_N.$$ Additionally, recall from Section~\ref{prelim} that $A^*$ denotes the multiset of entries of a matrix $A$, and let $\Gamma^{*}=N^{*}$ be the multiset of edge multiplicities of $\Gamma$.
For the remainder of this section let $l>2$ and $k$ be integers, and write $l=qk+r$ where $-2\leq r\leq k-3$.
\begin{proposition}\label{jamesmin}
Suppose $k\geq 8$ and $r\geq 3$. Then $$\chi(k,l)=(q+1)!^{rk}q!^{k^2-rk}.$$ Moreover, if $\Gamma$ is a $(k,l)$-bipartite graph with $|\Aut(\Gamma)|=\chi(k,l)$, then $\Aut_V(\Gamma)=1$ and $$\Gamma^*=\{(q+1)^{rk},(q)^{k^2-rk}\}.$$
\end{proposition}
\begin{proof}
Since $r\geq 3$ and $k\geq \max\{8,r+3\}$, by~\cite[Theorem 1.2]{james} there exists a partwise fixed $(k,r)$-intersection matrix $A$ with all entries in $\{0,1\}$. It follows that $N=A+qJ$ is a partwise fixed $(k,l)$-intersection matrix with all entries $n_{ij}$ in $\{q,q+1\}$. 
Moreover, $\prod_{i,j}n_{ij}!$ is minimum amongst all products of $k^2$ integers with sum $kl$, therefore such a matrix yields a stabiliser of minimum size.
\end{proof}

We now move on to determine $\chi(k,l)$ for each of the remaining values of $r$ separately, starting with the case $r=\pm2$. The proofs take the same general form --- we start with a lemma which explicitly describes a partwise fixed intersection matrix, and then show that any other matrix yields a strictly larger automorphism group, except for a handful of possible exceptions which are dealt with separately. Let $E(i,j)$ be the $k\times k$ matrix with $ij$-entry 1 and all others 0. For the rest of the section we shall assume that $k\geq 7$.

\begin{lemma}\label{bestmat2}
Suppose $r=\pm2$ and set $\varepsilon=r/2$. Let $v=(q+\varepsilon,q+\varepsilon,q,q,\dots,q)\in \Z^k$, and define $$N=\theta(v)-\varepsilon E(k-2,k-2)+\varepsilon E(k-2,k-3)+\varepsilon E(k,k-2)-\varepsilon E(k,k-3).$$ Then $N$ is a partwise fixed $(k,l)$-intersection matrix.
\end{lemma}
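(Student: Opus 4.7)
The plan is to establish two things: that $N$ is a $(k,l)$-intersection matrix, and that $K_N = 1$.

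For the intersection-matrix property, the four perturbations applied to $\theta(v)$ form a balanced quadruple: rows $k-2$ and $k$ each receive one $+\varepsilon$ and one $-\varepsilon$ modification, and so do columns $k-3$ and $k-2$. Since $\theta(v)$ has all line sums equal to $2(q+\varepsilon) + (k-2)q = l$, the same holds for $N$. Non-negativity reduces to a direct check on the four affected positions of $\theta(v)$: for $k \geq 9$ each equals $q$, so the new values $q \pm \varepsilon$ are non-negative whenever $q \geq 1$; the small cases $k \in \{7,8\}$ require verifying the one or two positions that happen to coincide with existing $(q+\varepsilon)$-entries of $\theta(v)$.

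For $K_N = 1$, I would start with a multiset analysis: rows $k-2, k$ and columns $k-3, k-2$ of $N$ each carry the multiset $\{(q+\varepsilon)^3, q-\varepsilon, q^{k-4}\}$, while the remaining rows and columns carry $\{(q+\varepsilon)^2, q^{k-2}\}$. Lemma \ref{fixlem} forces any $(\rho, \gamma) \in K_N$ to satisfy $\rho(\{k-2, k\}) = \{k-2, k\}$ and $\gamma(\{k-3, k-2\}) = \{k-3, k-2\}$; since the value $q-\varepsilon$ occurs only at the positions $(k-2, k-2)$ and $(k, k-3)$, the pair $(\rho, \gamma)$ must also permute those two positions. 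In Case 1, both positions are fixed. Examining the residual $(q+\varepsilon)$-positions of the four now-pointwise-fixed rows and columns forces $\gamma$ to preserve each of $\{4, 5\}$ and $\{2, 3\}$, and $\rho$ to preserve each of $\{5, 6\}$ and $\{4, 5\}$; intersecting gives $\rho(5) = 5$. The cyclic-shift structure of $\theta(v)$ then propagates: each row $i$ pointwise fixed by $(\rho, \gamma)$ forces $\gamma$ to preserve the column pair $\{2-i, 3-i\} \pmod k$, and once one element of such a pair is fixed so is the other. Iterating alternately through rows and columns traces around the $2k$-cycle of $(q+\varepsilon)$-positions and fixes every index, giving $\rho = \gamma = 1$.

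The main obstacle is Case 2, where $\rho$ swaps $k-2 \leftrightarrow k$ and $\gamma$ swaps $k-3 \leftrightarrow k-2$. Here I would observe that the defect matrix $N - \theta(v)$ is itself invariant under any such $(\rho, \gamma)$, so $(\rho, \gamma) \in K_{\theta(v)}$. The bipartite graph of $(q+\varepsilon)$-positions of $\theta(v)$ is a single $2k$-cycle (a direct trace-out using the pairs $\{2-i, 3-i\} \pmod k$ confirms this for $k \geq 7$), so $K_{\theta(v)}$ embeds in a dihedral group of order $2k$ generated by simultaneous opposite shifts $\rho(i) = i+s,\ \gamma(j) = j-s$ and sum-reversing reflections $\rho(i) = a-i,\ \gamma(j) = b-j$ with $a+b$ fixed modulo $k$. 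Matching each generator type against the prescribed action on the four indices $k-2, k, k-3, k-2$ yields a short arithmetic condition. Though the analysis is routine once $K_{\theta(v)}$ is explicitly described, the careful comparison of the several symmetry families of the circulant $\theta(v)$ against the specific perturbation pattern is where the bulk of the work lies.
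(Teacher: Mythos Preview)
Your computation of $N$ is off at a crucial point. In $\theta(v)$ with $v=(q+\varepsilon,q+\varepsilon,q,\dots,q)$, row $i$ carries its two $(q+\varepsilon)$-entries in columns $i$ and $i+1\pmod k$; in particular $\theta(v)_{k-2,k-2}=q+\varepsilon$, not $q$. Consequently the perturbation $-\varepsilon E(k-2,k-2)$ turns that entry into $q$, not $q-\varepsilon$. The four modified positions of $\theta(v)$ therefore have values $q+\varepsilon,\,q,\,q,\,q$ (at $(k-2,k-2)$, $(k-2,k-3)$, $(k,k-2)$, $(k,k-3)$ respectively), so $N$ has a \emph{unique} entry equal to $q-\varepsilon$, namely $n_{k,k-3}$. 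Your multiset table is accordingly wrong: row $k-2$ has the ordinary multiset $\{(q+\varepsilon)^2,q^{k-2}\}$ (its $(q+\varepsilon)$-entries sit in columns $k-3$ and $k-1$), whereas only row $k$ and column $k-3$ carry the exotic multiset $\{(q+\varepsilon)^3,q-\varepsilon,q^{k-4}\}$. This collapses your two-case split: by Lemma~\ref{fixlem} the unique $q-\varepsilon$ forces $k\in\fix(\rho)$ and $k-3\in\fix(\gamma)$ outright, so your Case~2 never arises, and the dihedral analysis of $K_{\theta(v)}$ is unnecessary.

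The paper proceeds from this anchoring: the three $(q+\varepsilon)$-entries in column $k-3$ force $\rho$ to stabilise $\{k-4,k-3,k-2\}$ setwise; since rows $1,\dots,k-3$ of $N$ coincide with those of $\theta(v)$ and hence form a weak truncated staircase, Lemma~\ref{stair} rules out $\rho$ preserving $[k-3]$ nontrivially (as that would move $k-3$ under $\gamma$). A short chase through the remaining $(q+\varepsilon)$-positions then eliminates each possible image of $k-2$ and $k-1$ under $\rho$, giving $\rho=1$ and hence $K_N=1$ by Lemma~\ref{fixlem}. Your propagation idea along the $(q+\varepsilon)$-cycle is in the right spirit, but the explicit index sets $\{4,5\},\{2,3\},\{5,6\}$ you quote do not match the actual $(q+\varepsilon)$-pattern of $N$ for general $k$; once you correct the picture of $N$, the truncated-staircase lemma does most of that work for you.
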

\begin{proof}
First note that $\theta(v)$ is a $(k,l)$-intersection matrix, that $N-\theta(v)$ is a $k\times k$ matrix with all row and column sums equal to 0, and that all entries $n_{ij}$ of $N$ are non-negative. So $N$ is a $(k,l)$-intersection matrix. 

Let $(\rho,\gamma)\in K_N$. Since $n_{k(k-3)}$ is the only entry of $N$ equal to $q-\varepsilon$, we deduce from Lemma~\ref{fixlem} that $$k\in\fix(\rho) \text{ and } k-3\in\fix(\gamma).$$ The only entries of $C_{k-3}$ equal to $q+\varepsilon$ are in positions $k-4$, $k-3$, and $k-2$ so we deduce that $\{k-4,k-3,k-2\}$ is setwise stabilised by $\rho$. Suppose $[k-3]^{\rho}=[k-3]$. Then since $N|_{[k-3]\times[k]}$ is a weak truncated staircase, either $\rho|_{[k-3]}=1$, or $k-3\in\mathrm{supp}(\gamma)$ by Lemma~\ref{stair}. The latter is a contradiction, therefore if $\rho\ne1$ then $\{k-1,k-2\}\cap\mathrm{supp}(\rho)\ne\emptyset.$

 If $(k-2)^\rho= k-3$ then since $n_{(k-2)(k-1)}=q+\varepsilon=n_{(k-3)(k-2)}$ and the only other $q+\varepsilon$ in rows $k-2$ and $k-3$ are in $C_{k-3}$ which is fixed, we deduce that $(k-1)^\gamma= k-2$. This is a contradiction as $n_{k(k-1)}=q$ whereas $n_{k(k-2)}=q+\varepsilon$, and $\rho$ fixes $k$. On the other hand, if $(k-2)^\rho = k-4$, then $(k-1)^\gamma =k-4$ since $n_{(k-2)(k-1)}=n_{(k-4)(k-4)}=q+\varepsilon$. But then $n_{(k-5)(k-4)}=q+\varepsilon=n_{(k-1)(k-1)}$ implies that $(k-1)^\rho= k-5$. Finally $n_{(k-5)(k-5)}=q+\varepsilon=n_{(k-1)k}$ so $k^\gamma = k-5$, a contradiction since $n_{kk}=q+\varepsilon$ and $n_{k(k-5)}=q$. But $\rho$ stabilises $\{k-4,k-3,k-2\}$, whence $\rho$ fixes $k-2$, and so $k-1\in \supp(\rho)$.

Since $\rho$ fixes $k$ and $k-2$ we deduce that $(k-1)^\rho\in[k-3]$ . Additionally, $n_{(k-1)(k-1)}=q+\varepsilon$, and so $(k-1)^\gamma\in[k-2]$. 
Moreover, since $k-2\in\fix(\rho)$ and $n_{(k-2)(k-1)},n_{(k-2)(k-3)}$ are the only occurrences of $q+\varepsilon$ in $R_{k-2}$ we deduce $(k-1)^\gamma = k-3$, a contradiction. Therefore, $K_N\pi_1=1$, hence by Lemma~\ref{fixlem} since all columns of $N$ are distinct, $N$ is partwise fixed. 
\end{proof}

\begin{proposition}\label{2}
Suppose $r=\pm2$ and set $\varepsilon=r/2$. Then $$\chi(k,l)=(q-\varepsilon)!q!^{k^2-2k-2}(q+\varepsilon)!^{2k+1}.$$ Moreover, if $\Gamma$ is a $(k,l)$-bipartite graph with $|\Aut(\Gamma)|=\chi(k,l)$, then $\Aut_V(\Gamma)=1$ and $$\Gamma^*=\{(q-\varepsilon)^1,(q)^{k^2-2k-2},(q+\varepsilon)^{2k+1}\}.$$
\end{proposition}
\begin{proof}
Let $N$ be as in Lemma~\ref{bestmat2}. By Lemmas~\ref{top} and~\ref{bestmat2}, \begin{equation}\label{Nsize1} |G(N)|=(q-\varepsilon)!q!^{k^2-2k-2}(q+\varepsilon)!^{2k+1}.\end{equation} In particular if $\Sigma$ is the set of all $(k,l)$-intersection matrices with at least one entry greater than $q$ if $\varepsilon=-1$ and less than $q$ if $\varepsilon=1$, then by applying Lemma~\ref{minN} to $(s,t,X)=(lk,k^2,[(1,q-\varepsilon)])$ (so $x=x(s,t,X)$ satisfies $\{\lfloor x\rfloor,\lceil x\rceil\}=\{q,q+\varepsilon\}$) we deduce that $\min \{|G(A)| : A\in \Sigma\}$ is achieved only by matrices $A$ such that $A^*=N^*$, and hence with $K_A=K_N=1$. 
The proof proceeds by comparing $G(N)$ to the automorphism groups arising from other $(k,l)$-intersection matrices.

Let $A$ be a $(k,l)$-intersection matrix with $|G(A)|\leq |G(N)|$. We shall show that $|G(A)|=|G(N)|$ and that $A^*=N^*$, and hence that $K_A=K_N$. If $A$ has all entries in $\{q,q+\varepsilon\}$, then up to reordering rows and columns, $$A=B+qJ,$$ where $B$ is block diagonal with $s$ blocks and the $i$th block $B_i$ of $B$ is equal to $\theta(u)$ for the vector $u=(\varepsilon,\varepsilon,0,0,0,\dots,0)\in\mathbb{Z}^{m_i}$ for some $m_i\geq 2$. Note that $\prod_{i=1}^s (\S_{m_i}\times\S_{m_i})_{B_i}$ is isomorphic to a subgroup of $K_A$, and so $$|K_A|\geq \prod_{i=1}^s m_i\geq 4.$$ Therefore, by Lemma~\ref{top} and~\eqref{Nsize1}, $$|G(A)|\geq q!^{k^2-2k}(q+\varepsilon)!^{2k}\cdot 4=\frac{4q}{q+1}|G(N)|>|G(N)|,$$ a contradiction, hence $A$ has an entry not in $\{q,q+\varepsilon\}$. We split into cases depending on $\varepsilon$.

\vspace{11pt}\noindent {\underline{\textbf{Case $\varepsilon =-1$:}}} Suppose $\min A^*\leq q-2$. Since $A^*$ has at least one element which is at most $q-2<\lfloor l/k\rfloor$, by applying Lemma~\ref{minN} to $(s,t,X)=(lk,k^2,[(1,q-2)])$ 
we deduce that $$|G(A)|\geq q!^{k^2-2k+1}(q-1)!^{2k-2}(q-2)!=\frac{q^2}{q^2-1}|G(N)|>|G(N)|,$$ a contradiction. Therefore, $A$ has no entry less than $q-1$, and hence $\max A^*\geq q+1$. Thus $A\in \Sigma$, and so by the opening paragraph of the proof $|G(A)|=|G(N)|$. Therefore, by uniqueness $A^*=N^*$, and $A$ is partwise fixed. 

\vspace{11pt}\noindent {\underline{\textbf{Case $\varepsilon =1$:}}} Suppose $\max A^*\geq q+2$. If $A^*$ has more than one element greater than $q+1$ (counting multiplicity), then applying Lemma~\ref{minN} to $(s,t,X)=(lk,k^2,[(2,q+2)])$ we deduce that $$|G(A)|\geq (q+2)!^2q!^{k^2-2k+2}(q+1)!^{2k-4}=\frac{(q+2)^2q}{(q+1)^3}|G(N)|>|G(N)|,$$ a contradiction. Therefore, $A$ has at most one entry greater than $q+1$. Since $|G(N)|$ is minimum over $\Sigma$, it follows that if $\max A^*>q+1$ then $\min A^*= q$. Suppose $A$ has exactly one entry greater than $q+1$, and $\min A^*=q$. Then up to reordering $A$ has first row and column $(q+2,q,q,\dots,q)$, and the submatrix spanning the other rows and columns has the form $B+qJ$ described above. Thus, $$|G(A)|\geq(q+2)!(q+1)!^{2k-2}q!^{k^2-2k+1}\cdot 4>|G(N)|,$$ a contradiction. Therefore $|G(A)|=|G(N)|$, $A^*=N^*$, and $A$ is partwise fixed. 
\end{proof}
We next consider the case $r=0$, proceeding in a similar fashion.

\begin{lemma}\label{bestmat0}
Suppose $r=0$. Let $v=(q+1,q-1,q,q,q,\dots,q)\in \Z^k$, and define $$N=\theta(v)+E(k-3,k)-E(k-3,1)-E(k,k)+E(k,1).$$ Then $N$ is a partwise fixed $(k,l)$-intersection matrix.
\end{lemma}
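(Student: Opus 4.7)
My plan follows the template of Lemma~\ref{bestmat2}: first verify that $N$ is a $(k,l)$-intersection matrix, then show $K_N=1$ by combining row-multiset constraints with the near-staircase structure of the unmodified rows.

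The first step is immediate. Each of the four correction terms $\pm E(\cdot,\cdot)$ cancels in both its row and its column, so every row sum and column sum of $N$ equals the corresponding sum of $\theta(v)$, namely $l=qk$, and every entry of $N$ lies in $\{q-1,q,q+1\}\subseteq\Z_{\geq 0}$ (using $q\geq 1$, which follows from $l>2$ and $k\geq 7$). Listing the resulting structure will be convenient: row $k$ is $(q,\ldots,q)$; row $k-3$ has its two $q+1$ entries in columns $\{k-3,k\}$ and its two $q-1$ entries in columns $\{1,k-2\}$; every other row $i$ has its unique $q+1$ in column $i$ and its unique $q-1$ in column $i+1$ (where column $k$ plays the role of the successor of column $k-1$).

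For the second step, let $(\rho,\gamma)\in K_N$. The row multisets $R_k^*=\{q^k\}$, $R_{k-3}^*=\{(q+1)^2,(q-1)^2,q^{k-4}\}$, and $R_i^*=\{q+1,q-1,q^{k-2}\}$ for $i\in T:=\{1,\ldots,k-4,k-2,k-1\}$ are pairwise distinct, so Lemma~\ref{fixlem} forces $\rho$ to fix both $k$ and $k-3$ and to permute $T$. Since $\rho$ fixes row $k-3$, the permutation $\gamma$ must setwise stabilise the $q+1$-positions $\{k-3,k\}$ and the $q-1$-positions $\{1,k-2\}$ of that row, and therefore also their common complement $\{2,3,\ldots,k-4,k-1\}$ in $[k]$. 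Since each row $i\in T$ has its unique $q+1$ in column $i$, comparing $q+1$-positions row by row yields $\gamma|_T=\rho|_T$; in particular $\rho$ preserves $\{1,k-2\}$ setwise.

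This leaves two cases. If $\rho$ fixes both $1$ and $k-2$, then chasing $q-1$-positions through the rows of $T$ shows inductively that $\gamma$ fixes $2,3,\ldots,k-3$, after which rows $k-2$ and $k-1$ force $\gamma(k-1)=k-1$ and $\gamma(k)=k$; thus $\gamma=1$ and so $\rho=1$. If instead $\rho$ swaps $1$ and $k-2$, then comparing $q-1$-positions in row $1$ forces $\gamma(2)=k-1$, and applying the same comparison to row $2$ (whose $\rho$-image is $\gamma(2)=k-1$) forces $\gamma(3)$ to equal the $q-1$-position of row $k-1$, namely column $k$. This contradicts the setwise preservation of $\{2,3,\ldots,k-4,k-1\}$, using crucially that $k\geq 7$ guarantees $3\leq k-4$ and hence that $3$ lies in this complement while $k$ does not. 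So only the first case survives and $K_N=1$. I expect the only subtle point to be that this contradiction requires the blanket hypothesis $k\ge 7$; everything else is a direct adaptation of Lemma~\ref{bestmat2}.
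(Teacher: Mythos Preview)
Your proof is correct, and the route is close in spirit to the paper's but differs in the key reduction. The paper observes that the first $k-4$ rows of $N$ form a strong truncated staircase and invokes Lemma~\ref{stair} to conclude that either $[k-4]\subseteq\fix(\rho)$ or $\rho$ does not stabilise $[k-4]$; it then carries out a case analysis on $(k-2)^\rho$ and $(k-1)^\rho$ to reach contradictions. You bypass Lemma~\ref{stair} entirely: from the diagonal placement of the $q+1$ entries you extract the identity $\rho|_T=\gamma|_T$ directly, which together with $\gamma$'s preservation of $\{1,k-2\}$ reduces everything to the single dichotomy ``$\rho$ fixes or swaps $1$ and $k-2$''. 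Your chain argument through the $q-1$ positions then dispatches both cases cleanly, and you correctly isolate where $k\geq 7$ is needed. The trade-off is that the paper's approach amortises work across several lemmas by reusing the staircase machinery, while yours is more self-contained and arguably more transparent for this particular matrix.
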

\begin{proof}

By inspection $N$ is a $(k,l)$-intersection matrix. Let $(\rho,\gamma)\in K_N$. 
Note that $N|_{[k-4]\times [k]}$ is a strong truncated staircase, so by Lemma~\ref{stair} either $\rho$ does not setwise stabilise $[k-4]$, or $[k-4]\subseteq\fix(\rho)$. Moreover, by Lemma~\ref{fixlem}, since $R_{k-3}$ is the only row with two entries equal to $q+1$, and $R_k$ is the only row with all entries equal to $q$, we deduce that $\rho$ fixes $k-3$ and $k$. Since $\rho$ fixes $k-3$ and $R_{k-3}=(q-1,q,q,\dots,q,q+1,q-1,q,q+1)$, we deduce \begin{equation}\label{stabsets}\{1,k-2\}^\gamma=\{1,k-2\}\quad\text{and}\quad\{k-3,k\}^\gamma=\{k-3,k\}.\end{equation} Moreover, if $\rho\ne 1$  then $\{k-2,k-1\}\cap\supp(\rho)\ne\emptyset$.

Suppose $(k-2)^\rho= i$ for some $i\not\in\{k-3,k-2,k\}$. Then since $n_{(k-2)(k-2)}$ is the only $q+1$ in $C_{k-2}$, it follows that $k-2\in\supp(\gamma)$, hence $(k-2)^\gamma=1=(k-2)^\rho$ by \eqref{stabsets}. Now, $n_{12}=q-1=n_{(k-2)(k-1)}$, so $(k-1)^\gamma= 2$, and so $n_{22}=q+1=n_{(k-1)(k-1)}$ implies that $(k-1)^\rho= 2$. But $n_{23}=q-1=n_{(k-1)k}$ so we conclude $k^\gamma= 3$, a contradiction. Therefore, $\rho$ fixes $k-2$.

Finally, suppose $(k-1)^\rho=i\not\in \{k-3,k-2,k-1,k\}$. Then since the only $q+1$ in $C_{k-1}$ is $n_{(k-1)(k-1)}$, we deduce that $k-1\in\supp(\gamma)$. Similarly, the only $q-1$ in $R_{k-2}$ is $n_{(k-2)(k-1)}$, hence $k-2\in\supp(\rho)$, a contradiction. Therefore $\rho=1$. The result now follows from Lemma~\ref{fixlem}.
\end{proof}
\begin{proposition}\label{0}
Suppose $r=0$. Then $$\chi(k,l)=(q+1)!^kq!^{k^2-2k}(q-1)!^k.$$ Moreover, if $\Gamma$ is a $(k,l)$-bipartite graph with $|\Aut(\Gamma)|=\chi(k,l)$, then either $\Aut_V(\Gamma)=1$ and $\Gamma^*=\{(q+1)^k,(q)^{k^2-2k},(q-1)^k\}$, or $l=k$, $|\Aut_V(\Gamma)|=2$, and $\Gamma^*=\{(2)^{k-1},(1)^{k^2-2k+2},(0)^{k-1}\}$.
\end{proposition}
\begin{proof}
Let $N$ be as in Lemma~\ref{bestmat0}, so that \begin{equation}\label{G(N)2}|G(N)|=(q+1)!^kq!^{k^2-2k}(q-1)!^k,\end{equation} by Lemma~\ref{top}. Let $A$ be a $(k,l)$-intersection matrix with $|G(A)|\leq |G(N)|$.  We shall show that $|G(A)|=|G(N)|$, and that either $A^*=N^*$ (and hence $K_A=K_N$), or $k=l$ and $|K_A|=2$.

Let $m$ be the number of entries of $A$ which are greater than $q$. It follows from \eqref{G(N)2} and Lemma~\ref{minN} applied to $(s,t,X)=(lk,k^2,[(m,q+1)])$ that $|G(N)|$ is minimum amongst all $(k,l)$-intersection matrices with $m\geq k$, and $N^*$ is the unique multiset achieving this minimum, hence the desired result holds for $m\geq k$. Suppose $m<k$. Then by permuting rows and columns we may assume that the initial $m\times m$ submatrix of $A$ is an $(m,qm)$-intersection matrix with $m$ entries greater than $q$. Moreover, all entries outside of this submatrix are $q$. Therefore, $K_A$ induces $\S_{k-m}\times \S_{k-m}$ on the final $k-m$ rows and columns, hence by Lemmas~\ref{top} and~\ref{minN} \begin{equation}\label{Abound}|G(A)|\geq (q+1)!^mq!^{k^2-2m}(q-1)!^m(k-m)!^2=\frac{q^{k-m}(k-m)!^2}{(q+1)^{k-m}}|G(N)|.\end{equation} The right-hand side of \eqref{Abound} is greater than $|G(N)|$ if $m<k-2$, so $m\geq k-2$. We now split into two cases.

\vspace{11pt}\noindent \textbf{\underline{Case $m=k-2$:}} Here, the right-hand side of \eqref{Abound} is $4q^2|G(N)|/(q+1)^2$, which implies that $q=1$, and $|G(A)|=|G(N)|$. Moreover, $\max A^*=2$, by the uniqueness statement of Lemma~\ref{minN}. If each of the $k-2$ entries equal to 2 are in distinct rows and columns, then $A$ has first $m\times m$ submatrix as in Lemma~\ref{centbound}, hence $$|G(A)|=|G(A|_{[m]\times [m]})|\cdot 2!^2\geq (2^{k-2}\cdot3)\cdot2!^2=3\cdot2^{k}>2^k=|G(N)|,$$ a contradiction, thus some row or column has two occurrences of 2. 
Therefore, $A$ has at least three columns and two rows equal to $(1)^k$ or vice versa. In particular, $|K_A|\geq 3!2!$. Therefore, $|G(A)|\geq 2^{k-2}\cdot3!\cdot2= 3\cdot2^{k}>|G(N)|$, a contradiction.

\vspace{11pt}\noindent \textbf{\underline{Case $m=k-1$:}} Suppose first that $\max A^*>q+1$. From Lemma~\ref{minN} applied to the triple $(s,t,X)=(lk,k^2,[(1,q+2),(k-2,q+1)])$ we deduce that $$|G(A)|\geq (q+2)!(q+1)!^{k-2}q!^{k^2-2k+1}(q-1)!^{k}=\frac{q+2}{q+1}|G(N)|>|G(N)|,$$ a contradiction. Therefore, the $m$ largest entries of $A$ are all $q+1$. If each of these $k-1$ entries are in distinct rows and columns, then $A$ has initial $(k-1)\times (k-1)$ submatrix as in Lemma~\ref{centbound}, hence  $$|G(A)|\geq ((q+1)!^{k-1}q!^{k^2-4k+3}(q-1)!^{k-1}\cdot 3)q!^{2k-1}=\frac{3q}{q+1}|G(N)|> |G(N)|.$$
Therefore, some row or column contains two ($q+1$)s. Thus, $A$ either has at least two rows or two columns which are $(q)^k$, and so $|K_A|\geq 2$, whence $$|G(A)|\geq (q+1)!^{k-1}q!^{k^2-2k+2}(q-1)!^{k-1}\cdot2=\frac{2q}{q+1}|G(N)|.$$ 
But $|G(N)|\geq |G(A)|$, so we deduce that $|G(A)|=|G(N)|$, that $q=1$, and that $|K_A|=2$.
\end{proof}
We now define our matrices for the case $r=\pm1$.
Suppose $k$ is odd. Set $u=(q-r,q,q,q,\dots,q,q+r)\in\Z^{\lceil k/2\rceil}$, $v=(q+r,q,q,\dots,q)\in\Z^{\lfloor k/2\rfloor}$, and $w=(q+r,q,q,\dots,q)\in\Z^{\lceil k/2\rceil}$. Let $N_1:=\theta(u)+r E(1,2)$, let $N_2$ be the $\lceil k/2\rceil\times \lfloor k/2\rfloor$ matrix with first row $(q)^{\lfloor k/2\rfloor}$, and the remaining $\lfloor k/2\rfloor$ rows given by $\theta(v)$, and $N_3$ the $\lfloor k/2\rfloor\times\lceil k/2\rceil$ matrix given by removing the second row of $\theta(w)$. Define $$B(k,r):=\begin{pmatrix} N_1&\aug& N_2\\ \hline N_3&\aug& qJ\\\end{pmatrix}.$$ For $k$ even, let $C(k,r)$ be the matrix obtained from $B(k-1,r)$ by appending $(q,q,\dots,q,q+r)$ as the last row and column.
\begin{lemma}\label{bestmat1}
Suppose $r=\pm 1$. Let $$N=\begin{cases} B(k,r)&\text{ if $k$ is odd}\\C(k,r)&\text{ if $k$ is even.}\end{cases}$$ Then $N$ is a partwise fixed $(k,l)$-intersection matrix.
\end{lemma}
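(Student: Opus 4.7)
The plan splits into checking that $N$ is a $(k,l)$-intersection matrix, and then showing $K_N = 1$.

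For the intersection-matrix property, since $\sum u_i = \lceil k/2 \rceil q$, $\sum v_i = \lfloor k/2 \rfloor q + r$, and $\sum w_i = \lceil k/2 \rceil q + r$, the cyclic-shift matrices $\theta(u), \theta(v), \theta(w)$ all have constant row and column sums. The perturbation $rE(1,2)$ in $N_1$, the prepended row $(q)^{\lfloor k/2 \rfloor}$ in $N_2$, and the deletion of the second row in $N_3$ are calibrated so that every row and column of $B(k,r)$ sums to $l = qk + r$; this is a direct check. For the even case, the row and column $(q, \ldots, q, q + r)$ appended to $B(k-1, r)$ to form $C(k,r)$ each trivially sum to $l$, so the property is inherited.

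For partwise fixedness in the odd case, take $(\rho, \gamma) \in K_N$ and set $m := \lceil k/2 \rceil$, $T := [m]$, $B := [k] \setminus T$. A row or column of $N$ contains a $q-r$ entry if and only if it is indexed by $T$; the unique $q-r$ in row $i$ of $T$ sits at column $i$ (from the cyclic position of $u_1$), and likewise for columns. Lemma \ref{fixlem} then yields that $\rho, \gamma$ each preserve $T$ and $B$, and tracking the $q-r$ entries forces $\rho|_T = \gamma|_T =: \pi$. The perturbation $rE(1,2)$ distinguishes row $1$: it is the unique row of $T$ carrying two $q + r$ entries in its $T$-columns (at positions $2$ and $m$), whereas rows $i \in T$ with $i \geq 2$ contain exactly one $q + r$ in $T$-columns, at column $i - 1$. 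This forces $\pi(1) = 1$, and then the relation $\pi(i - 1) = \pi(i) - 1$ (obtained by tracking the $T$-column $q + r$ of each row $i \geq 2$) forces $\pi = \mathrm{id}$ inductively. Once $\gamma|_T = \mathrm{id}$, each row $i \in T$ with $i \geq 2$ has its unique $q + r$ of $N_2$ at the distinct $B$-column $m + i - 1$, forcing $\gamma|_B = \mathrm{id}$. Finally, each of the $m - 1$ rows of $B$ has its unique $q + r$ in a distinct $T$-column (at columns $1, 3, 4, \ldots, m$, these being the positions recorded by $\theta(w)$ after its second row is deleted), so Lemma \ref{fixlem} gives $\rho|_B = \mathrm{id}$.

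The even case reduces to the odd one: row $k$ and column $k$ of $C(k, r)$ contain no $q - r$ entry, and their unique $q + r$'s sit at the corner position $(k, k)$, disjoint from the cyclic $q + r$ positions inherited from $B(k - 1, r)$. If $\rho(k) = i$ for some $i$ in the $B$-part of the embedded $B(k-1, r)$, then $\gamma(k)$ would have to lie in the top half of the columns of $B(k-1, r)$ to match the $q + r$ in row $i$; but every such column inherits a $q - r$, contradicting the absence of $q - r$ in column $k$. Hence $\rho(k) = \gamma(k) = k$, and the remainder is handled by the odd case. The main obstacle is the bookkeeping required for the cyclic propagation in the odd case: identifying the $q \pm r$ positions in each row of $N_1, N_2, N_3$ and verifying that the off-diagonal blocks constrain $\pi$ to be the identity. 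No invocation of Lemma \ref{stair} is ultimately needed, since the single perturbation $rE(1,2)$ propagates through the cyclic structure to break the rotational symmetry outright.
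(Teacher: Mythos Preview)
Your proof is correct but follows a genuinely different route from the paper's. Both arguments begin by using the $q-r$ entries to see that $\rho$ and $\gamma$ preserve $T=[\lceil k/2\rceil]$, but then diverge. The paper reduces everything to showing that $N_1$ alone is partwise fixed: it uses a contrapositive (if $(\rho,\gamma)$ moves anything in $B$, it must move something in $T$), fixes row $1$ of $N_1$ by its two $q+r$'s, and then observes that rows $2,\ldots,m$ of $N_1$ form a \emph{strong truncated staircase}, so Lemma~\ref{stair} finishes the job. You instead exploit the fact that the $q-r$ entries of $N_1$ sit exactly on the diagonal to deduce $\rho|_T=\gamma|_T=:\pi$ directly, and then run an elementary induction $\pi(i-1)=\pi(i)-1$ from $\pi(1)=1$, bypassing Lemma~\ref{stair} entirely. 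You then pin down $\gamma|_B$ and $\rho|_B$ by reading off the $q+r$ positions in $N_2$ and $N_3$ explicitly, and you separate the even case, reducing it to the odd one by showing $\rho(k)=\gamma(k)=k$; the paper treats both parities in a single sweep.

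What each approach buys: yours is entirely self-contained and makes the role of every block ($N_1,N_2,N_3$, and the appended row/column) transparent, at the cost of more explicit bookkeeping. The paper's argument is terser because the staircase machinery (Lemma~\ref{stair}) absorbs the inductive step, and the contrapositive reduction avoids the separate analysis of $N_2$, $N_3$, and the even case. One small point: your invocation of Lemma~\ref{fixlem} at the end (for $\rho|_B$) is really using the rows-and-columns-swapped version of its second clause, together with the fact that all rows of $N$ are distinct; this is fine but worth stating.
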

\begin{proof}
A quick check verifies that $N$ is indeed a $(k,l)$-intersection matrix. Let $(\rho,\gamma)\in K_N$. Since $R_i$ has an entry equal to $q-r$ if and only if $i\leq \lceil k/2\rceil$, it follows from Lemma~\ref{fixlem} that both $[\lceil k/2\rceil ]$ and $[k]\setminus[\lceil k/2\rceil]$ are unions of $K_N\pi_1$-orbits. Moreover, each $R_i$ with $i>\lceil k/2\rceil$ has exactly one entry equal to $q+r$, and each such entry occurs in a distinct column (at most one of which has index greater than $\lceil k/2\rceil$) so if $\lceil k/2\rceil<i\in\supp(\rho)$ then there is some $c\in\supp(\gamma)$ for $c\leq \lceil k/2\rceil$. The same is true when the roles of rows and columns are reversed. That is, if $\supp(\rho,\gamma)\cap\left( [k]^2\setminus [\lceil k/2\rceil ]^2\right)\ne\emptyset$ then $\supp(\rho,\gamma)\cap[\lceil k/2\rceil ]^2\ne\emptyset$, hence it suffices to show that $H:=(\S_{\lceil k/2\rceil}\times\S_{\lceil k/2\rceil})_{N_1}=1$.

Let $(\sigma,\tau)\in H$. By Lemma~\ref{fixlem}, $1\in\fix(\sigma)$ as $R_1$ is the only row with two entries equal to $q+r$. Moreover, $N_1|_{([\lceil k/2\rceil]\setminus\{1\})\times [\lceil k/2\rceil]}$ is a strong truncated staircase so we deduce from Lemma~\ref{stair} that $\sigma=1$. Since all columns of $N_1$ are distinct, $H=1$ by Lemma~\ref{fixlem}.
\end{proof}

\begin{proposition}\label{1}
Suppose $r=\pm1$. Then $$\chi(k,l)=(q-r)!^{\lceil k/2\rceil}q!^{k^2-k-2\lceil k/2\rceil}(q+r)!^{k+\lceil k/2\rceil}.$$ Moreover, if $\Gamma$ is a $(k,l)$-bipartite graph with $|\Aut(\Gamma)|=\chi(k,l)$, then either $\Aut_V(\Gamma)=1$ and $$\Gamma^*=\{(q-r)^{\lceil k/2\rceil},(q)^{k^2-k-2\lceil k/2\rceil},(q+r)^{k+\lceil k/2\rceil}\},$$ or $l=k+r$, $|\Aut_V(\Gamma)|=2$, and $$\Gamma^*=\{(1-r)^{\lceil k/2\rceil-1},(1)^{k^2-k-2\lceil k/2\rceil+2},(1+r)^{k+\lceil k/2\rceil-1}\}.$$
\end{proposition}
\begin{proof}
Let $N$ be as in Lemma~\ref{bestmat1}. From Lemma~\ref{top}, we deduce that \begin{equation}\label{G(N)1}|G(N)|=(q-r)!^{\lceil k/2\rceil}q!^{k^2-k-2\lceil k/2\rceil}(q+r)!^{k+\lceil k/2\rceil}.\end{equation} 
Let $A=(a_{ij})$ be a $(k,qk+r)$-intersection matrix with $|G(A)|\leq |G(N)|$. 

If $r=1$ then let $m$ be the number of entries of $A$ less than $q$, and if $r=-1$ then let $m$ be the number of entries of $A$ greater than $q$. If $m\geq\lceil k/2\rceil$, then by \eqref{G(N)1} and Lemma~\ref{minN} applied to $(s,t,X)=(lk,k^2,[(\lceil k/2\rceil,q-r)])$, since $$ x(s,t,X)=\frac{lk-(\lceil k/2\rceil)(q-r)}{k^2-\lceil k/2\rceil}$$ is between $q$ and $q+r$ we deduce that $|G(A)|=|G(N)|$, that $A$ is partwise fixed and that $A^*=N^*$. Therefore, we may assume $m<\lceil k/2\rceil$.

By reordering the rows and columns so that the $m$ distinguished entries occur in the initial $m\times m$ submatrix, $A$ has the form
$$\begin{pmatrix}
A_1&\aug&A_2\\\hline
A_3&\aug&A_4\\
\end{pmatrix}$$
where $A_1$ is $m\times m$ and $A_2,A_3,A_4$ have all entries in $\{q,q+r\}$  (for if not then they would also contain one of the distinguished entries, contradicting that there are only $m$ such entries). In particular, since all row and column sums are $qk+r$ we deduce that $q+r$ occurs exactly once in $R_i$ and $C_i$ for each $i>m$. For $1\leq i\leq m$, let $\mathcal{C}_i$ 
 be the set of all $j\in \{m+1,m+2,\dots, k\}$ such that $a_{ij}=q+r$ --- these entries appear in $A_2$ --- and similarly $\mathcal{R}_i$ 
 the set of all $ j\in \{m+1,m+2,\dots, k\}$ such that $a_{ji}=q+r$ --- these entries appear in~$A_3$. 

If $j_1,j_2\in \mathcal{C}_i$ for $1\leq i\leq m$, then $(1, (j_1\, j_2))\in K_A$, and similarly if $j_1,j_2\in \mathcal{R}_i$ then $((j_1\, j_2),1)\in K_A$. Finally, if $j_1,j_2\in \mathcal{C}_0$, then there exist unique $i_1\ne i_2>m$ such that $a_{i_1j_1}=a_{i_2j_2}=q+r$ and so $((i_1\, i_2), (j_1\, j_2))\in K_A$. Combining this with Lemma~\ref{minN} applied to $(s,t,X)=(lk,k^2,[(m,q-r)])$ gives \begin{equation}\label{asize}|G(A)|\geq (q-r)!^mq!^{k^2-k-2m}(q+r)!^{k+m}|\mathcal{C}_0|!\prod_{i=1}^{m}|\mathcal{C}_i|!|\mathcal{R}_i|!.\end{equation} We now split into cases according to the value of $m$.

\vspace{11pt}\noindent \underline{\textbf{Case $m<\lceil k/2\rceil -1$:}} The matrix $A_2$ has $k-m$ columns and at most one $q+r$ per column, so at most $m+1$ columns are distinct. Therefore, $|\mathcal{C}_0|!\prod_{i=1}^{m}|\mathcal{C}_i|!|\mathcal{R}_i|!\geq 2^{k-2m-1}$, so by \eqref{asize} $$|G(A)|\geq 2^{k-2m-1}(q-r)!^mq!^{k^2-k-2m}(q+r)!^{k+m}=\frac{2^{k-2m-1}q^{\lceil k/2\rceil-m}}{(q+1)^{\lceil k/2\rceil-m}}|G(N)|>|G(N)|,$$ a contradiction.

\vspace{11pt}\noindent \underline{\textbf{Case $m=\lceil k/2\rceil -1$:}} If two columns of $A_2$ or two rows of $A_3$ are the same, then by \eqref{asize}, $$|G(A)|\geq2(q-r)!^mq!^{k^2-k-2m}(q+r)!^{k+m}=\frac{2q}{q+1}|G(N)|\geq |G(N)|,$$ with equality only possible if $q=1$; $|K_A|=2$; and $m$ of the entries are equal to $(1-r)$, $k+m$ entries to $(1+r)$, and all other entries $1$. 
 On the other hand if all columns of $A_2$ are distinct, as well as all rows of $A_3$, then each row of $A_2$ has sum $(k-m)q+1$, as does each column of $A_3$, hence $A_1$ is an $(m,qm)$-intersection matrix. If $A$ is not partwise fixed then as before $|G(A)|\geq\frac{2q}{q+1}|G(N)|.$ Therefore, there remain two possibilities for $A$:
\begin{itemize}
    \item[(i)] $q=1$, $A_1$ is an $(m,m)$-intersection matrix with $m$ entries equal to $1-r$, all other entries in $\{1+r,1\}$,  and $|K_A|=2$.
    \item[(ii)] $A$ is partwise fixed with $A_1$ a partwise fixed $(m,qm)$-intersection matrix, all columns of $A_2$ are distinct, and all rows of $A_3$ are distinct.

\end{itemize}
In the first case we are done, so consider case (ii) --- we show it is not possible. If the $m$ distinguished entries of $A_1$ are not all in distinct rows and columns, then without loss of generality $R_{m}(A)$ begins with $(q)^m$. Since $m=\lceil k/2\rceil-1$ and all rows of $A_3$ are distinct, some $R_i(A)$ with $m+1\leq i\leq k$ begins $(q)^m$. Let $j_1,j_2\geq m+1$ be the unique indices such that $a_{mj_1}=a_{ij_2}=q+r$. Then $((m\, i), (j_1\,j_2))\in K_A$, and so $|K_A|\geq2$, a contradiction, whence the distinguished entries of $A_1$ are in distinct rows and columns.

Suppose $A_1$ has an entry greater than $q+1$ if $r=-1$ or less than $q-1$ if $r=1$. Then from Lemma~\ref{minN} we deduce that $$|G(A)|\geq (q-2r)!(q-r)!^{m-1}q!^{k^2-k-2m-1}(q+r)!^{k+m+1}=\frac{q-r+1}{q-r}|G(N)|>|G(N)|,$$
a contradiction, hence the $m$ distinguished entries all equal $q-r$. Since each $q-r$ occurs in a distinct row and column, $q+sr\not\in A_1^*$ for any $s>1$ --- that is, $A_1$ is as in Lemma~\ref{centbound}, so is not partwise fixed, a contradiction.
\end{proof}
Putting together Propositions~\ref{di},~\ref{jamesmin},~\ref{2},~\ref{0}, and~\ref{1}, we deduce Theorem~\ref{mainmin}.

\section{Asymptotics}\label{probaut}
In this section we are concerned with the limiting distributions arising from uniformly sampling $(k,l)$-bipartite graphs. 
Most of the proofs in this section will be via intersection matrices, which can be thought of as vertex-labelled $(k,l)$-bipartite graphs (see Section~\ref{prelim}); the labellings are usually irrelevant to the asymptotics. As our study is mostly through matrices, some of the results and discussion here can be viewed as an analysis of the integer points of special classes of \emph{transportation polytopes} --- generalisations of the famous Birkhoff polytope. Given positive integers $k$ and $l$, define $\Omega_{k,l}$ to be the set of all $(k,l)$-intersection matrices, and $H_k(l):=|\Omega_{k,l}|.$ We first state a marvellous result of Stanley, adapted to suit our use throughout this section. 

\begin{theorem}[\cite{stanley}]\label{iklsize}
Let $k,l\geq2$ be integers. Then $H_k(l)$ is a polynomial in $l$ of degree $(k-1)^2$. In particular, for each $k$ there is some $c(k)>0$ depending only on $k$ such that $H_{k}(l)\geq c(k)l^{(k-1)^2}$ for all $l$.
\end{theorem}
Throughout this section it will be useful to recall that $\Aut_E(\Gamma)$ is the group of automorphisms of the graph $\Gamma$ which fix the vertices pointwise, and that the full automorphism group of a $(k,l)$-bipartite graph is given up to isomorphism by \eqref{autg}. We begin our analysis by considering the vertex automorphisms --- there will almost always be as few as possible.
\begin{proposition}\label{trivverts}
Let $k\geq 2$, let $l\to\infty$, and let $\Gamma$ be a uniformly randomly chosen $(k,l)$-bipartite graph. Then asymptotically almost surely $$|\Aut_V(\Gamma)|=\begin{cases}2&\text{ if $k=2$,}\\
1&\text{ otherwise.}\end{cases}$$
\end{proposition}
\begin{proof}
We show the analogous result for intersection matrices --- that is, for a uniformly chosen $(k,l)$-intersection matrix $N$, asymptotically almost surely $|K_N|=2$ if $k=2$ and $K_N=1$ otherwise. From this the desired result follows since each $(k,l)$-bipartite graph corresponds to at most $k!^2$ such matrices. 

If $N$ is a $(2,l)$-intersection matrix then $N=\theta(a,l-a)$ for some $a$, hence $|K_N|\ne 2$ if and only if $N=\frac{l}{2}J$. Since there are exactly $l$ choices for $a$ the result follows in this case.

Consider now $k\geq 3$. We give a na\"ive upper bound on the number of $(k,l)$-intersection matrices with at least one row being a permutation of another, which we call \emph{bad}, and show that these make up a small proportion of all $(k,l)$-intersection matrices.

Each bad matrix can be constructed as follows. We first pick two rows to have the same multisets of entries --- there are ${k\choose 2}$ ways of doing so, call them $R_1$ and $R_2$. There are at most $l^{k-1}$ possibilities for each of $R_1$, $R_3$, $R_4,\dots, R_{k-1}$ (as the final entry of each is determined by the first $k-1$), and then at most $k!$ choices for $R_2$. Finally $R_{k}$ is uniquely determined by $R_1,R_2,\dots, R_{k-1}$, hence there are at most ${k\choose 2}k!l^{(k-1)(k-2)}$ bad matrices; this quantity is $o(H_k(l))$ by Theorem \ref{iklsize}. By symmetry the same is true when columns are considered instead of rows, hence $K_N=1$ by Lemma~\ref{fixlem}, as desired. 
\end{proof}

To prove Theorem~\ref{mainrand} we will make use of a very nice identity, which seems to be known, but a proof in full generality is difficult to locate --- we sketch one here.

\begin{lemma}\label{genrec}
Let $f:\mathbb{N}\to\mathbb{N}$ be a function, set $b_0=1$ and for each positive integer $n$ define $b_n:=\sum_{\lambda\vdash n}\prod_{j}f(\lambda_j)$, where the sum is over all partitions $\lambda=(\lambda_1,\dots,\lambda_t)$ of $n$. Then $$b_n=n^{-1}\sum_{t=1}^n\left(b_{n-t}\sum_{d|t}d\cdot f(d)^{t/d}\right).$$
\end{lemma}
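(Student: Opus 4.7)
The plan is to recognise the sum defining $b_n$ as a coefficient of a very clean infinite product, and then derive the recursion by logarithmic differentiation. Introduce the formal power series
$$B(x) := \sum_{n\geq 0}b_n x^n, \qquad P(x) := \prod_{d\geq 1}\frac{1}{1-f(d)x^d}.$$
The first step is to show $B(x)=P(x)$. Expanding each factor as a geometric series gives
$$P(x)=\prod_{d\geq 1}\sum_{m_d\geq 0}f(d)^{m_d}x^{dm_d},$$
so the coefficient of $x^n$ is obtained by summing $\prod_d f(d)^{m_d}$ over non-negative integer sequences $(m_d)_{d\geq 1}$ with $\sum_d dm_d=n$. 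Such sequences are in bijection with partitions $\lambda\vdash n$ via letting $m_d$ be the number of parts of $\lambda$ equal to $d$; under this bijection $\prod_d f(d)^{m_d}=\prod_j f(\lambda_j)$, so $[x^n]P(x)=b_n$.

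Next I would apply logarithmic differentiation. Since $\log P(x)=\sum_{d\geq 1}\sum_{m\geq 1}\tfrac{1}{m}f(d)^m x^{dm}$ as formal power series,
$$\frac{P'(x)}{P(x)}=\sum_{d\geq 1}\sum_{m\geq 1}d\,f(d)^m x^{dm-1},$$
and multiplying by $x$ gives
$$x\,P'(x)=P(x)\cdot\sum_{d\geq 1}\sum_{m\geq 1}d\,f(d)^m x^{dm}.$$
Collecting terms with $dm=t$, the second factor equals $\sum_{t\geq 1}c_t x^t$ where $c_t=\sum_{d\mid t}d\,f(d)^{t/d}$.

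Finally I would extract the coefficient of $x^n$ from both sides using the Cauchy product. The left hand side contributes $nb_n$, and the right hand side contributes $\sum_{t=1}^{n}b_{n-t}c_t$, giving
$$nb_n=\sum_{t=1}^{n}b_{n-t}\sum_{d\mid t}d\,f(d)^{t/d},$$
which is the stated identity after dividing by $n$.

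There is no real obstacle here: everything is a manipulation of formal power series, so questions of convergence do not arise. The only point requiring a little care is the partition-to-exponent-sequence bijection that identifies $B(x)$ with the product $P(x)$; after that, the logarithmic derivative trick is entirely routine.
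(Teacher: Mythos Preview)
Your proof is correct and follows essentially the same route as the paper: identify the generating function $\sum_{n\geq 0} b_n x^n$ with the infinite product $\prod_{d\geq 1}(1-f(d)x^d)^{-1}$, apply logarithmic differentiation, and read off the coefficient of $x^n$. You in fact supply slightly more detail than the paper, which simply asserts the product formula and sketches the remaining steps.
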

\begin{proof}
Note that $b_n$ has generating function $\sum_{n\geq 0}b_nx^n=\prod_{d\geq 1}(1-f(d)x^d)^{-1}$. By taking logarithms of both sides, followed by differentiating and then multiplying by $x$ we deduce the equality $$\frac{\sum_{n\geq 0} nb_nx^n}{\sum_{i\geq 0}b_ix^i}=\sum_{d\geq 1}\frac{df(d)x^d}{1-f(d)x^d}=\sum_{d\geq 1}d(f(d)x^d+f(d)^2x^{2d}+\cdots).$$ The coefficient of $x^n$ in $$\left(\sum_{i\geq 0}b_ix^i\right)\left(\sum_{d\geq 1}d(f(d)x^d+f(d)^2x^{2d}+\cdots)\right)$$ is $\sum_{t=1}^nb_{n-t}\sum_{d|t}d\cdot f(d)^{t/d}$, hence the result.
\end{proof}
\begin{remark}
A special case of this result appears in Erd\H{o}s' beautiful 1942 paper on the asymptotics of integer partitions \cite{erdos}; his proof is somewhat more direct.
\end{remark}

For our purposes we restrict ourselves to $b_n:=\sum_{\lambda\vdash n}\prod_{j}\lambda_j!$, that is, choose the function in Lemma~\ref{genrec} to be $f(d)=d!$. We will use this sequence to approximate the products of the entry factorials of our $(k,l)$-intersection matrices in a natural way. The next lemma proves useful in bounding $b_n$.

\begin{lemma}\label{prodmin}
Let $n\geq 17$ be an integer. Then $$(n-t)!t!\left(1+\frac{4}{n-t}\right)\cdot2t\leq (n-1)!\cdot 3$$ for all  $1\leq t\leq n-3$. 
\end{lemma}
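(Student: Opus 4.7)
The plan is to divide both sides of the inequality by $(n-1)!$ and, after absorbing the factor $(1 + 4/(n-t)) = (n-t+4)/(n-t)$ into the factorials, reformulate the inequality in the equivalent form
\[
2t(n-t+4) \leq 3\binom{n-1}{t},
\]
using the identity $(n-t-1)!\,t!/(n-1)! = 1/\binom{n-1}{t}$. In this form the left-hand side is at worst quadratic in $n$, while the right-hand side is a binomial coefficient, so the inequality can only be tight for $t$ near the boundary of $\{1,2,\dots,n-3\}$.

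I would then handle four regimes separately. For $t = 1$ the claim reduces to $2(n+3) \leq 3(n-1)$, i.e.\ $n \geq 9$. For $t = 2$, after clearing denominators, it becomes $8(n+2) \leq 3(n-1)(n-2)$, valid for all $n \geq 7$. For $t = n-3$ one obtains $28(n-3) \leq 3(n-1)(n-2)$, a quadratic inequality in $n$ that holds whenever $n \geq 9$. For the middle range $3 \leq t \leq n-4$ one applies the standard bound $\binom{n-1}{t} \geq \binom{n-1}{3} = (n-1)(n-2)(n-3)/6$ (the minimum of $\binom{n-1}{t}$ on this interval is attained at the endpoints, noting $\binom{n-1}{n-4} = \binom{n-1}{3}$) together with the easy estimate $2t(n-t+4) \leq (n+4)^2/2$ (maximising the quadratic in $t$ at $t = (n+4)/2$); it then suffices to verify $(n+4)^2 \leq (n-1)(n-2)(n-3)$, a cubic-versus-quadratic comparison that holds comfortably for $n \geq 10$.

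Since $n \geq 17$ is assumed, every regime passes with substantial slack. The only real subtlety is that the ratio $2t(n-t+4)/\binom{n-1}{t}$ attains its maximum at the boundary values $t \in \{1, 2, n-3\}$, where the binomial denominator is only linear or quadratic in $n$; this is precisely what forces an explicit lower bound on $n$, since the cases $t = 1$ and $t = n-3$ already fail at $n = 8$. There is no deep obstacle here --- the main step is identifying the correct reformulation so that the boundary cases reduce to manageable quadratic inequalities in $n$, and the middle range is dominated by a cubic binomial coefficient.
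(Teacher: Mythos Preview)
Your reformulation is correct and your proof goes through cleanly. Dividing by $(n-1)!$ and absorbing $(n-t+4)/(n-t)$ into the factorials to obtain $2t(n-t+4)\le 3\binom{n-1}{t}$ is a genuine simplification, and the four regimes $t=1$, $t=2$, $t=n-3$, $3\le t\le n-4$ each reduce to an elementary polynomial inequality in $n$ as you describe.

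The paper takes a different route. Rather than rewriting in terms of binomial coefficients, it works directly with the factorial expression: for $2\le t\le n-8$ it bounds $(1+4/(n-t))\cdot 2t\le 3(t+1)$ to obtain the majorant $3(n-t)!(t+1)!$, then uses unimodality of $(n-t)!(t+1)!$ in $t$ to reduce to $t=2$; the remaining values $t\in\{n-7,\dots,n-3\}$ are checked by explicit calculation analogous to the $t=n-3$ computation. Your approach is arguably more transparent --- the inequality $2t(n-t+4)\le 3\binom{n-1}{t}$ makes the quadratic-versus-binomial gap visible at a glance, and the middle range needs only a single cubic-versus-quadratic comparison rather than several endpoint checks. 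The paper's argument, by contrast, avoids introducing binomial coefficients but pays for it with more casework near the upper boundary.
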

\begin{proof}
The result holds when $t=1$.  If $2\leq t\leq n-8$, then \begin{equation}\label{factineq}(n-t)!t!\left(1+\frac{4}{n-t}\right)\cdot2t\leq3(n-t)!t!\cdot t\leq 3(n-t)!(t+1)!.\end{equation} Now, $(n-t)!(t+1)!$ is non-increasing in $t$ until $t=\lceil (n+1)/2\rceil$, at which point it is non-decreasing, and moreover $(n-t)!(t+1)!$ is symmetric about $(n+1)/2$, whence it suffices to show that when $t=2$ the right hand side of \eqref{factineq} is at most $(n-1)!\cdot 3$. Evaluating the expression we deduce $3(n-2)!3!=\frac{6}{n-1}(n-1)!\cdot 3\leq (n-1)!\cdot 3$, as desired.

For $t=n-3$ we calculate \begin{align*}(n-(n-3))!(n-3)!\left(1+\frac{4}{n-(n-3)}\right)\cdot2(n-3)&=6\cdot(n-3)!\left(1+\frac{4}{3}\right)\cdot 2(n-3)\\&=28(n-3)!(n-3)\\&=\frac{28(n-3)}{3(n-1)(n-2)}(n-1)!\cdot 3\\&\leq (n-1)!\cdot 3\end{align*} since $n\geq 17$. A similar calculation can be done for $n-7\leq t\leq n-4$, so the result follows.
\end{proof}
\begin{lemma}\label{comps}
Let $n\geq 1$ and let $b_n=\sum_{\lambda\vdash n}\prod_{j}\lambda_j!$. Then $b_n\leq n!\left(1+\frac{4}{n}\right)$.
\end{lemma}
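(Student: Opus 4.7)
I would prove this by strong induction on $n$, with base cases $n \leq 16$ verified by direct computation (this threshold aligning with the scope of Lemma~\ref{prodmin}). For the inductive step with $n \geq 17$, the natural tool is the recurrence from Lemma~\ref{genrec} applied with $f(d) = d!$: writing $T(t) := \sum_{d|t} d(d!)^{t/d}$, one has $nb_n = \sum_{t=1}^{n} b_{n-t}T(t)$. I would first establish the auxiliary bound $T(t) \leq 2t \cdot t!$ for all $t \geq 1$, via the multinomial identity $(d!)^{t/d}(t/d)! \mathbin{|} t!$ (which gives $(d!)^{t/d} \leq t!/(t/d)!$) together with the geometric tail estimate $\sum_{m \geq 2} 1/(m \cdot m!) < 1$.

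The heart of the argument is to isolate the principal contribution from $(t,d) = (n,n)$, namely $n \cdot n!$, reducing the target to $(T(n) - n \cdot n!) + \sum_{t=1}^{n-1} b_{n-t}T(t) \leq 4n!$. For the bulk range $1 \leq t \leq n-3$, the inductive hypothesis $b_{n-t} \leq (n-t)!(1 + 4/(n-t))$ together with $T(t) \leq 2t \cdot t!$ puts us in position to invoke Lemma~\ref{prodmin} termwise, yielding $b_{n-t}T(t) \leq 3(n-1)!$ and hence a total bulk contribution of at most $(3n-9)(n-1)!$.

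The exceptional terms $t \in \{n-2, n-1\}$ and the residual $T(n) - n \cdot n! = \sum_{d|n,\, d<n} d(d!)^{n/d}$ fall outside the scope of Lemma~\ref{prodmin}, and for these I would use the exact values $b_0 = 1,\, b_1 = 1,\, b_2 = 3$ in place of the loose inductive bound, combined with the decomposition $T(s) = s \cdot s! + \sum_{d|s,\, d<s} d(d!)^{s/d}$ for $s \in \{n, n-1, n-2\}$. Each residual sum is controlled by the observation that every proper divisor $d$ of $s$ satisfies $d \leq s/2$: the dominant term is $(s/2)((s/2)!)^2$ when $s$ is even, which via the central-binomial bound $\binom{s}{s/2} \geq 2^s/(s+1)$ is at most $(s+1)s!/2^s$, vanishingly small compared to $(s-1)!$ for $s \geq 17$. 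After substitution and telescoping via the identity $(s-1)(s-1)! = s! - (s-1)!$, the left-hand side collapses to $(4n-6)(n-1)! - 2(n-2)! + 3(n-3)!$; subtracting from $4n! = 4n(n-1)!$ and rearranging reduces the desired inequality to $2(n-2)(3n-2) \geq 3$, which holds for all $n \geq 3$.

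The main obstacle will be the careful handling of the boundary terms $t \in \{n-2, n-1, n\}$: Lemma~\ref{prodmin}'s bulk contribution already accounts for nearly the entire $4n!$ budget, so the naive $T(t) \leq 2t \cdot t!$ estimate is too coarse for these three values of $t$ and must be replaced by the finer decomposition above. A cleaner alternative that avoids Lemma~\ref{genrec} altogether would be to work directly with the partition-level identity $b_n = n! + \sum_{k=1}^{n-1} k! \cdot p(n-k, k)$, where $p(m,k) := \sum_{\mu \vdash m,\, \mu_1 \leq k} \prod \mu_j!$; using $p(n-k,k) \leq b_{n-k}$, the exact value $b_1 = 1$ for the extremal $k = n-1$ term, and the rapid geometric decay of $1/\binom{n}{m}$ away from $m \in \{1, n-1\}$, this yields $b_n - n! \leq 2(n-1)! + O((n-2)!)$ with comparable effort.
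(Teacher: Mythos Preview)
Your approach is essentially identical to the paper's: both use the recurrence of Lemma~\ref{genrec} with $f(d)=d!$, establish $T(t)\le 2t\cdot t!$, verify $n\le 16$ directly, apply Lemma~\ref{prodmin} to the bulk range $1\le t\le n-3$, and treat the exceptional terms $t\in\{n-2,n-1,n\}$ via the exact values $b_0,b_1,b_2$ together with a sharper bound on the proper-divisor tail of $T(s)$. Your derivation of $T(t)\le 2t\cdot t!$ from the divisibility $(d!)^{t/d}(t/d)!\mid t!$ and the tail sum $\sum_{m\ge 2}1/(m\cdot m!)<1$ is a bit cleaner than the paper's (which bounds $(d!)^{t/d}\le\lfloor t/2\rfloor!\lceil t/2\rceil!$ and invokes a divisor-count estimate), and your residual bound drops a harmless factor of $s/2$, but the overall architecture and all the key thresholds match exactly.
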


\begin{proof}
By Lemma~\ref{genrec} with $f(d)=d!$ \begin{equation}\label{rec}b_n=n^{-1}\sum_{t=1}^n\left( b_{n-t}\sum_{d|t}d\cdot (d!)^{t/d}\right).\end{equation} We start by giving an upper bound on $\sum_{d|t}d(d!)^{t/d}$ for each $t\geq 1$, and then induct on $n$.

First, $\sum_{d|t}d\cdot (d!)^{t/d}=t\cdot t!+\sum_{d|t,d<t}d\cdot (d!)^{t/d}$. If $d|t$ with $d<t$, then $d\leq t/2$, and moreover $(d!)^{t/d}$ is a product of each integer less than or equal to $d$, each occurring in the product exactly $t/d$ times. On the other hand $\left\lfloor\frac{t}{2}\right\rfloor!\left\lceil\frac{t}{2}\right\rceil!$ is a product of $2d$ of the aforementioned integers with an additional $t-2d$ integers each of which is greater than $d$. Therefore, $$d\cdot(d!)^{t/d}\leq \frac{t}{2}\left\lfloor\frac{t}{2}\right\rfloor!\left\lceil\frac{t}{2}\right\rceil!.$$
Moreover, $t$ has at most $2t^{1/2}$ divisors, whence $\sum_{d|t,d<t}d\cdot (d!)^{t/d}\leq 2t^{1/2}\frac{t}{2}\left\lfloor\frac{t}{2}\right\rfloor!\left\lceil\frac{t}{2}\right\rceil!$, and so $$\sum_{d|t}d\cdot (d!)^{t/d}\leq t\cdot t!+t^{3/2}\left\lfloor\frac{t}{2}\right\rfloor!\left\lceil\frac{t}{2}\right\rceil!= t\cdot t!\left(1+t^{1/2}{t\choose \lfloor t/2\rfloor}^{-1}\right).$$ Finally, ${t\choose \lfloor t/2\rfloor}\geq\left(\frac{t}{\lfloor t/2\rfloor}\right)^{\lfloor t/2\rfloor}\geq\left(\frac{t}{t/2}\right)^{(t-1)/2}=2^{(t-1)/2}$ for all $t>1$, and $1^{1/2}{1\choose 0}=1$, whence \begin{equation}\label{dbound}\sum_{d|t}d\cdot (d!)^{t/d}\leq \left(1+\frac{t^{1/2}}{2^{(t-1)/2}}\right)t\cdot t!\end{equation}\begin{equation}\label{dbound2}\leq 2t\cdot t!.\end{equation}

We now induct on $n$. The claim can be verified computationally for $1\leq n\leq 16$, so suppose $n\geq 17$. Set $R_n:=\frac{1}{n\cdot n!}\sum_{t=1}^{n-3}\left(b_{n-t}\sum_{d|t}d\cdot (d!)^{t/d}\right)$. Then 
\begin{align*}R_n&\leq \frac{1}{n\cdot n!}\sum_{t=1}^{n-3}b_{n-t} 2t\cdot t!&&\text{by~\eqref{rec} and~\eqref{dbound2}}\\&\leq\frac{1}{n\cdot n!}\sum_{t=1}^{n-3}(n-t)!\left(1+\frac{4}{n-t}\right)2t\cdot t!&&\text{by induction}\\&\leq \frac{n-3}{n\cdot n!}3(n-1)!&&\text{by Lemma~\ref{prodmin}}\\&=3(n-3)/n^2.\end{align*}
 
Again by~\eqref{rec}, $$b_n=n^{-1}\left(b_0\left(\sum_{d|n}d\cdot (d!)^{n/d}\right)+b_1\left(\sum_{d|(n-1)}d\cdot (d!)^{(n-1)/d}\right)+b_2\left(\sum_{d|(n-2)}d\cdot (d!)^{(n-2)/d}\right)\right)+n!R_n.$$
Applying~\eqref{dbound} we deduce that $b_n$ is at most $$n!\left(b_0\left(1+\frac{n^{1/2}}{2^{(n-1)/2}}\right)+b_1\cdot \frac{n-1}{n^2}\left(1+\frac{(n-1)^{1/2}}{2^{(n-2)/2}}\right)+b_2\cdot \frac{n-2}{n^2(n-1)}\left(1+\frac{(n-2)^{1/2}}{2^{(n-3)/2}}\right)+R_n\right).$$Now, $b_0=b_1=1$, and $b_2=3$, so putting this together with $R_n\leq 3(n-3)/n^2$ we deduce that \begin{align*}b_n&\leq n!\left(1+\frac{n^{1/2}}{2^{(n-1)/2}}+\frac{n-1}{n^2}\left(1+\frac{(n-1)^{1/2}}{2^{(n-2)/2}}\right)+\frac{3n-6}{n^2(n-1)}\left(1+\frac{(n-2)^{1/2}}{2^{(n-3)/2}}\right)+\frac{3(n-3)}{n^2}\right)\\&=n!(1+g(n)),\end{align*} say. 
One can verify that $g(n)<4/n$ for $17\leq n\leq 25$, and standard techniques (e.g. the first derivative test) show that for $n\geq 25$, $g(n)/(4/n)$ is increasing with limit 1, hence the result.
\end{proof}

\begin{proposition}\label{factexp}
Let $k$ and $l$ be integers at least $2$ and let $X$ be a uniformly chosen $(k,l)$-intersection matrix. 
Then there exists a constant $d_0(k)>0$ depending only on $k$ such that $$\mathbb{E}\left(\prod_{i,j\leq k} X_{i,j}!\right)\leq\frac{d_0(k)}{l^{(k-1)^2}}l!^k.$$
\end{proposition}
\begin{proof}
Since $X$ is chosen uniformly, $$\mathbb{E}\left(\prod_{i,j\leq k} X_{i,j}!\right)=H_{k}(l)^{-1}\sum_{N\in\Omega_{k,l}}\left(\prod_{i,j}n_{ij}!\right)
$$ 
Let $N\in\Omega_{k,l}$. Then each row of $N$ is an ordered partition of $l$ into exactly $k$ non-negative parts, thus we deduce from Lemma~\ref{comps} that 
$$\sum_{N\in \Omega_{k,l}}\prod n_{ij}!\leq (k!b_l)^k\leq \left(k!l!\left(1+\frac{4}{l}\right)\right)^k.$$
Therefore, $$\mathbb{E}\left(\prod_{i,j\leq k} X_{i,j}!\right)\leq H_{k}(l)^{-1}\left(k!l!\left(1+\frac{4}{l}\right)\right)^k\leq H_{k}(l)^{-1}l!^k3^kk!^k.$$ But $H_{k}(l)\geq c(k)l^{(k-1)^2}$ by Theorem~\ref{iklsize}, whence there is some $d_0(k)>0$ with the desired property.
\end{proof}

We now translate our work into the language of $(k,l)$-bipartite graphs.
\begin{proposition}\label{bipexp}
Let $k$ and $l$ be integers at least $2$ and let $\Gamma$ be a uniformly chosen $(k,l)$-bipartite graph. Then there exists a constant $d(k)>0$ depending only on $k$ such that $$\mathbb{E}(|\Aut_{E}(\Gamma)|)\leq\frac{d(k)}{l^{(k-1)^2}}l!^k.$$
\end{proposition}

\begin{proof}
Let $X$ be a uniformly chosen $(k,l)$-intersection matrix. Each $(k,l)$-bipartite graph, $\Sigma$, corresponds to at most 
$k!^2$ distinct $(k,l)$-intersection matrices. 
Thus if $N$ is a $(k,l)$-intersection matrix and $\Sigma$ is a $(k,l)$-bipartite graph with bipartite adjacency matrix $N$, then $\mathbb{P}(\Gamma=\Sigma)/\mathbb{P}(X=N)\leq k!^2.$ Therefore, $$\mathbb{E}(|\Aut_{E}(\Gamma)|)\leq k!^2\mathbb{E}\left(\prod X_{i,j}!\right).$$ The result follows from Proposition~\ref{factexp}.
\end{proof}
Finally we can deduce Theorem~\ref{mainrand}
\begin{proof}[Proof of Theorem~\ref{mainrand}]
We show that $$|\Aut_E(\Gamma)| \leq l^{\varepsilon-{(k-1)}^2}l!^k$$ asymptotically almost surely: combining this with Proposition~\ref{trivverts} gives the result.

By Markov's inequality, $$\mathbb{P}\left(|\Aut_E(\Gamma)|\geq l^{\varepsilon-{(k-1)}^2}l!^k\right)\leq \frac{\mathbb{E}(|\Aut_{E}(\Gamma)|)}{l^{{(k-1)}^2-\varepsilon}l!^k}.$$ By Proposition~\ref{bipexp}, the right hand side tends to 0 as $l$ gets large.
\end{proof}

To understand the distribution of entries of $(k,l)$-intersection matrices we use a framework similar to that of Chatterjee, Diaconis, and Sly~\cite{cds}. We begin by setting up some notation.
Let $k\geq 2$, and $l\geq 1$. We use $|_{k-1}$ to denote the restriction of a $k\times k$ matrix to its first $k-1$ rows and columns. Given a matrix $A\in\mathbb{Z}^{(k-1)\times (k-1)}$, there is a unique matrix $\varphi_l(A)\in\Z^{k\times k}$ with all row and column sums equal to $l$, and such that $\varphi_l(A)|_{k-1}=A$. Indeed, there is clearly a unique choice for each entry in the $k$th row or column aside from the $(k,k)$ entry. Moreover, $\sum_{i=1}^{k-1} \varphi_l(A)_{ik}=\sum_{i=1}^{k-1} \varphi_l(A)_{ki}$ since both sums are uniquely determined by the sum of all entries of $A$, whence the $(k,k)$ entry is also well-defined. When clear from context we omit the subscript $l$.

Next, set $$I_{k,l}:=\{A\in\M_{k-1,k-1} : \varphi_l(A)\in\Omega_{k,l}\},$$ so that $\varphi_l$ induces a bijection from $I_{k,l}$ to $\Omega_{k,l}$ (and hence $|I_{k,l}|=H_{k}(l)$). Define $$E_{k,l}:=\{A\in \M_{k,k} : A|_{k-1}=0_{(k-1)\times (k-1)}\},$$ the set of outer `edges' of non-negative integer entries and let $D_{k,l}:=\{A+B : A\in\Omega_{k,l}, B\in E_{k,l}\}$. 

We approximate the uniform distribution on $\Omega_{k,l}$ by the distribution of matrices of independent geometric random variables. To this end, let $p=\frac{k}{l+k}$, let $q=\frac{l}{l+k}$, and \begin{center}let $Y$ be a $k\times k$ matrix of independent $\mathrm{Geometric}(p,q)$ random variables,\end{center} that is $\mathbb{P}(Y_{i,j}=a)=pq^a$ for all integers $a\geq 0$. It is shown in~\cite{ditt} that for $X$ chosen uniformly from $\Omega_{k,l}$, the marginal $X_{1,1}$ approaches the geometric distribution in total variation distance as $k$ grows. We give an approximation for fixed $k$ and large $l$.

\begin{proposition}\label{dkbound}
Let $l$ and $k$ be integers at least $2$. Then each of the following holds: \begin{enumerate}\item[(i)]
conditional on $Y\in D_{k,l}$, the matrix $Y|_{k-1}$ is distributed uniformly on $I_{k,l}$;
\item[(ii)] there is some $h(k)>0$ depending only on $k$ such that $\mathbb{P}(Y\in D_{k,l})\geq h(k)$ for all $l$ sufficiently large; and
\item[(iii)]
 if $\mathcal{A}$ is any event and $X$ is a uniformly chosen $(k,l)$-intersection matrix then $\mathbb{P}(X|_{k-1}\in \mathcal{A})\leq h(k)^{-1}\mathbb{P}(Y|_{k-1}\in\mathcal{A}).$\end{enumerate}
\end{proposition}
\begin{proof}
Let $N\in I_{k,l}$. To show (i) it suffices to show that $\mathbb{P}(Y|_{k-1}=N\text{ and } Y\in D_{k,l})$ is independent of $N$. By definition of $D_{k,l}$, \begin{alignat*}{2}\mathbb{P}(Y|_{k-1}=N\text{ and } Y\in D_{k,l})&=\sum_{A=(a_{ij})\in E_{k,l}}\mathbb{P}(Y=\varphi(N)+A)
=\sum_{A\in E_{k,l}}\prod_{i,j}\mathbb{P}(Y_{i,j}=\varphi(N)_{i,j}+a_{ij})\\
&=\sum_{A\in E_{k,l}}\prod_{i,j} pq^{\varphi(N)_{i,j}+a_{ij}}
=\sum_{A\in E_{k,l}}p^{k^2}q^{\sum_{i,j}(\varphi(N)_{i,j}+a_{ij})}\\
&=\sum_{A\in E_{k,l}}p^{k^2}q^{lk+\sum_{i,j}a_{ij}}=p^{(k-1)^2}q^{lk}\sum_{A\in E_{k,l}} p^{2k-1}q^{\sum_{i,j}a_{ij}}\\
&=p^{(k-1)^2}q^{lk}\sum_{A\in E_{k,l}} \mathbb{P}(Y=A | Y\in E_{k,l})\\&=p^{(k-1)^2}q^{lk},\end{alignat*} which is independent of $N$, as desired.

Now, $$\mathbb{P}(Y\in D_{k,l})=\sum_{N\in I_{k,l}}\mathbb{P}(Y|_{k-1}=N\text{ and } Y\in D_{k,l})=\sum_{N\in I_{k,l}}p^{(k-1)^2}q^{lk}=|I_{k,l}|p^{(k-1)^2}q^{lk}.$$
From Theorem~\ref{iklsize} and the fact that $H_k(l)=|I_{k,l}|$, there is some $c(k)>0$ such that $|I_{k,l}|\geq c(k)l^{(k-1)^2}$, whence \begin{align*}\mathbb{P}(Y\in D_{k,l})&\geq c(k)l^{(k-1)^2}\left(\frac{k}{l+k}\right)^{(k-1)^2}\left(\frac{l}{l+k}\right)^{lk}\\&=c(k)k^{(k-1)^2}\left(\frac{l}{l+k}\right)^{lk+(k-1)^2}.
\end{align*} But $\left(\frac{l}{l+k}\right)^{lk+(k-1)^2}\to e^{-k^2}$ as $l\to\infty$, so 
setting $h(k)$ to be any quantity less than $c(k)k^{(k-1)^2}e^{-k^2}$ yields~(ii).

Finally, by (i) and (ii), $$\mathbb{P}(X|_{k-1}\in \mathcal{A})=\frac{\mathbb{P}(Y|_{k-1}\in\mathcal{A}\text{ and } Y\in D_{k,l})}{\mathbb{P}(Y\in D_{k,l})}\leq h(k)^{-1}\mathbb{P}(Y|_{k-1}\in\mathcal{A})$$ so (iii) holds.
\end{proof}

\begin{remark}
If we allow $k,l\to\infty$ with $l\gg k$, one can apply the asymptotic formula of Canfield and McKay~\cite[Corollary 1]{cm} to replace $h(k)$ with the explicit expression $k^{-2k}$.
\end{remark}

Proposition~\ref{dkbound} shows that events that occur with low probability as $l\to \infty$ for $Y|_{k-1}$ must also occur with low probability for the restriction to the initial $(k-1)\times (k-1)$ submatrix of random $(k,l)$-intersection matrices. We use this observation to deduce the final main result of this section.

\begin{proof}[Proof of Theorem~\ref{main4}]
By Proposition~\ref{dkbound}(iii), $$\mathbb{P}\big(|X_{1,1}-l/k|\leq f(l)\big)\leq h(k)^{-1}\mathbb{P}\big(|Y_{1,1}-l/k|\leq f(l)\big),$$
but $Y_{1,1}$ is geometric, so $$\mathbb{P}\big(|Y_{1,1}-l/k|\leq f(l)\big)=\left(\frac{l}{l+k}\right)^{l/k-f(l)}-\left(\frac{l}{l+k}\right)^{l/k+f(l)+1}.$$ Finally, $$\lim_{l\to\infty}\left(\frac{l}{l+k}\right)^{f(l)+1}=\lim_{l\to\infty}\left(\left(\frac{l}{l+k}\right)^{l}\right)^{\frac{f(l)+1}{l}}=\lim_{l\to\infty}(e^{-k})^{\frac{f(l)+1}{l}}=1$$
and similarly $\lim_{l\to\infty}\left(\frac{l}{l+k}\right)^{-f(l)}=1$, whence $\mathbb{P}\big(|Y_{1,1}-l/k|\leq f(l)\big)\to 1-1=0$ as $l\to \infty$, the result follows from the union bound.
\end{proof}
Recall the notation $\Gamma^*$ of Section~\ref{minaut} used to denote the multiset of edge multiplicities of a multigraph $\Gamma$. Since the maximum number of $(k,l)$-intersection matrices associated with each $(k,l)$-bipartite graph is independent of $l$ we immediately deduce the following corollary to Theorem~\ref{main4}.
\begin{corollary}\label{edges}
Let $k\geq 2$ and $l$ be integers and let $f(l)$ be any $o(l)$ function. Let $\Gamma$ be a uniformly chosen $(k,l)$-bipartite graph. Then asymptotically almost surely $$\min_{\omega\in\Gamma^*}|\omega-l/k|>f(l)\text{ as $l\to\infty$.}$$
\end{corollary}

\section{Synchronization}\label{sync}

This section is a coda: we show that $\S_{k\times l}$ and $\A_{k\times l}$ are non-synchronizing if
$k\geq3$ (and $l\ge2$) --- here $\A_{k\times l}$ is the alternating group acting on $(k,l)$-partitions. First we give the background in synchronization theory.

A finite-state deterministic automaton $D$ is a machine which reads a sequence of
letters from a finite alphabet $A$ and changes its internal state after each
letter depending on the letter read. The automaton $D$ is called
\emph{synchronizing} if there is some word $w$ of letters of $A$ such that reading $w$ brings the machine to a unique state, independent of its starting
state. Such a word is called a \emph{reset word}.

From another perspective, each letter of $A$ corresponds to a map
from the set $\Omega$ of states of $D$ to itself. Since these maps are composed when
a word is read, the set of possible transformations of $\Omega$ is a
transformation monoid (with a given generating set corresponding to the
letters). So we can say that a transformation monoid is \emph{synchronizing} if
it contains an element of \emph{rank}~$1$, that is, mapping the whole of 
$\Omega$ to a single point.

A permutation group of degree greater than $1$ cannot be synchronizing in this
sense, so the term was re-used for permutation groups as follows: the
permutation group $G$ on $\Omega$ is \emph{synchronizing} if and only if, for
every map $f:\Omega\to\Omega$ which is not a permutation, the monoid
$\langle G,f\rangle$ is synchronizing.

It is known that a permutation group $G$ is non-synchronizing if and only if
it is contained in the automorphism group of a non-trivial (not complete or
null) graph whose clique number is equal to its chromatic number
\cite[Corollary 4.5]{acs}.

Synchronizing permutation groups have received a lot of attention, surveyed
in~\cite{acs}. We summarise some of the properties here.

\begin{itemize}
\item A synchronizing group is primitive.
\item A synchronizing group is not contained in a wreath product with product
action.
\item A diagonal group with more than two factors in the socle is
non-synchronizing.
\end{itemize}

For the remaining O'Nan--Scott classes (affine, two-factor diagonal, and
almost simple), some groups are synchronizing and others are not. Theorem~\ref{syncthm}
is a contribution to the almost simple case.

\begin{proof}[Proof of Theorem~\ref{syncthm}]
We construct a graph $\Gamma$ with vertex set all $(k,l)$-partitions by joining $\P_1$ and $\P_2$ if and only
if they have no common part. The graph $\Gamma$ is obviously invariant under $\S_{k\times l}$. We claim that the clique number and chromatic number of $\Gamma$ are
equal. To see this, first take the colouring of $\Gamma$ as follows. Choose an element $x$ of $[kl]$. For each $(l-1)$-subset $A\subseteq [kl]\setminus\{x\}$,
assign colour $c_A$ to a partition $\P$ if the part of $\P$ containing $x$ is
$\{x\}\cup A$. Each colour class is an independent set, so this is a
proper colouring with ${kl-1\choose l-1}$ colours.

To find a clique with size equal to the number of colours, we use Baranyai’s celebrated theorem~\cite{baranyai}: the $l$-sets in $[kl]$ can be partitioned into classes, each of which is a partition of $[kl]$. Of the resulting partitions, no two share a part, and so they form a clique of size ${kl-1\choose l-1}$ in the graph. 
\end{proof}

This construction fails for $k=2$ since, in this case, the graph is complete:
if two partitions share a part, they are equal. In fact, in this case the
group is $2$-transitive (and hence synchronizing) for $l=2$ and $l=3$, and
non-synchronizing for $l=4$ and $l=6$. It is synchronizing for $l=5$, but the
proof is computational.


\section*{Acknowledgments} 
We are grateful to Brendan McKay for his helpful suggestions, and to the anonymous referees for their comments which have improved the quality of the paper.



\begin{dajauthors}
\begin{authorinfo}[Cam]
  Peter J. Cameron\\
  School of Mathematics and Statistics\\
  University of St Andrews\\
  St Andrews, UK\\
  pjc20\imageat{}st-andrews\imagedot{}ac\imagedot{}uk
\end{authorinfo}
\begin{authorinfo}[del]
  Coen del Valle\\
  School of Mathematics and Statistics\\
  The Open University\\
  Milton Keynes, UK\\
  coen.del-valle\imageat{}open\imagedot{}ac\imagedot{}uk
\end{authorinfo}
\begin{authorinfo}[Ron]
  Colva M. Roney-Dougal\\
  School of Mathematics and Statistics\\
  University of St Andrews\\
  St Andrews, UK\\
  colva.roney-dougal\imageat{}st-andrews\imagedot{}ac\imagedot{}uk
\end{authorinfo}
\end{dajauthors}

\end{document}